\newcommand{\rr}{{\mathbb R}}
\newcommand{\nn}{{\mathbb N}}
\newcommand{\Sp}{{\mathbb S}}
\newcommand{\eps}{\varepsilon}
\newcommand{\set}[1]{\left\{#1\right\}}
\newcommand{\pare}[1]{\left(#1\right)}
\newcommand{\abs}[1]{\left|#1\right|}
\newcommand{\vv}{{\mathbf v}}
\newcommand{\uu}{{\mathbf u}}
\newcommand{\pI}[1]{\left <{#1}\right >}
\newcommand{\restri}[2]{\left.#1\right|_{#2}}
\newcommand\restr[2]{{
  \left.\kern-\nulldelimiterspace 
  #1 
  \vphantom{\big|} 
  \right|_{#2} 
  }}
 \newtheorem{theorem}{Theorem}[section]
\newtheorem{proposition}[theorem]{Proposition}
\newtheorem{corollary}[theorem]{Corollary}
\newtheorem{definition}[theorem]{Definition}
\newtheorem{remark}[theorem]{Remark}
\newtheorem{claim}[theorem]{Claim}
\newtheorem{step}[theorem]{Step}
\newtheorem{example}[theorem]{Example}
\begin{document}

\title{Maximum Principles and Consequences for $\gamma$-translators in $\rr^{n+1}$}
\author[J. Torres Santaella]{Jos\'e  Torres Santaella}
\thanks{The author was partially supported by the project
	PID2020-116126GB-I00 funded by MCIN/ AEI /10.13039/501100011033,
	by the project PY20-01391 (PAIDI 2020) funded by Junta de Andaluc\'{\i}a
	FEDER and by the framework of IMAG-Mar \'{\i}a de Maeztu grant CEX2020-
	001105-M funded by MCIN/AEI/ 10.13039/50110001103.
	.} 
\address{Departamento de Geometría y Topología\\ Universidad de Granada\\ Granada\\ España}
\email{jgtorre1@ugr.es}

\maketitle

\begin{abstract}
In this paper we obtain several properties of translating solitons for a general class of extrinsic geometric curvature flows given by a homogeneous, symmetric, smooth non-negative function $\gamma$ defined in an open cone $\Gamma\subset\rr^n$. The main results are tangential principles, nonexistence theorems for closed and entire solutions, and a uniqueness result that says that any strictly convex $\gamma$-translator defined on a ball with a single end $\mathcal{C}^2$-asymptotic to a cylinder is the ``bowl''-type solution found in \cite{Shati} up to vertical translations . 
\end{abstract}

\section{Introduction}
Geometric evolution equations for hypersurfaces have had a remarkable development over the last decades. This kind of equations lead to interesting non-linear PDE's that have been used to solve important open questions in mathematics and physics. 

In this paper we are interested in hypersurfaces in $\rr^{n+1}$ that evolve under translations in a fixed unitary direction when an extrinsic geometric flows is applied to them. 

More precisely, let $M_0=F_0(M)$ be an immersed hypersurface in $\rr^{n+1}$, we will say that $M_0$ evolves under a $\gamma$-flow if there exist a $1$-parameter family of immersions $F:M\times[0,T)\to\rr^{n+1}$ such that 
\begin{align}\label{gamma-flow}
	\begin{cases}
		\pI{\dfrac{\partial F}{\partial t}(x,t),\nu(x,t)}=\gamma(\lambda(x,t)),\mbox{ in }M\times(0,T),
		\\
		F(x,0)=F_0(x),
	\end{cases}
\end{align}
where $\pI{\cdot,\cdot}$ and $\nu(x,t)$ are the euclidean inner product and the inward unit normal vector of $M_t:=F(M,t)$ in $\rr^{n+1}$, respectively. In addition, the function $\gamma$ is evaluated in the principal curvature vector $\lambda(x,t)=(\lambda_1(x,t),\ldots,\lambda_n(x,t))$ of $M_t$ related to $\nu(x,t)$  and satisfies the following properties: 
\begin{enumerate}[a)]
	\item\label{a)} $\gamma:\Gamma\to(0,\infty)$ is a smooth symmetric  function (i.e.: $\gamma(\lambda_{\sigma(1)},\ldots,\lambda_\sigma(n))=\gamma(\lambda_1,\ldots,\lambda_n)$ for every permutation $\sigma\in S_n$) and it is defined on a symmetric open cone $\Gamma$ such that  $\Gamma_+:=\set{\lambda\in\rr^n:\lambda_i>0}\subset\Gamma\subset\rr^n$.
	\item\label{b)} $\gamma$ is $\alpha$-homogeneous (i.e.: for every $\lambda\in\Gamma$ and $c>0$, $\gamma(c\lambda)=c^{\alpha}\gamma(\lambda)$) for some $\alpha>0$. 
	\item\label{c)} $\gamma$ is increasing in each arguments (i.e.: for every $\lambda\in\Gamma$ and $\forall i=1,\ldots,n$, $\dfrac{\partial \gamma}{\partial \lambda_i}(\lambda)>0$).  
	\item\label{d} $\gamma$ posses a continuous extension to $0$ at the boundary of $\Gamma$ (i.e.: there exist a contunous function $\tilde{\gamma}:\overline{\Gamma}\to[0,\infty)$ such that $\restri{\tilde{\gamma}}{\Gamma}=\gamma$ and $\restri{\tilde{\gamma}}{\partial\Gamma}=0$).
\end{enumerate}
Recall that the principal curvatures of $M_t$ in $\rr^{n+1}$ are the eigenvalues of the Weingarten map. In local coordinates, it is given by $h^i_j=g^{ik}h_{kj}$, where $g^{ij}$ denotes the coefficients of the inverse of the metric tensor $g_{ij}$ of $M_t$, and $h_{ij}$ denotes the coefficients of the second fundamental form of $M_t$, respectively. The coefficients of the second fundamental form correspond to the tangent components of the covariant derivative of the unit normal map (up to orientation), where in local coordinates is given by $h_{ij}=\pI{\nabla_i\nu,e_j}$ where $\set{e_i}$ denotes a base  of $T_pM_t$. 

\begin{example}
	The class of functions $\gamma:\Gamma\to\mathbb{R}$ that satisfies the above hypotheses with $\alpha=1$ is vast and includes:
	\begin{itemize}
		\item The mean curvature $H=\lambda_1+\ldots+\lambda_n$, supported in $\Gamma_1=\set{\lambda\in\mathbb{R}^n:H>0}$.
		\item The $k$-th roots of the symmetric elemental polynomial $\sqrt[k]{S_k}$, where
		\begin{align*}
			S_k(\lambda)=\sum_{1\leq i_1<\ldots<i_k\leq n}\lambda_{i_1}\ldots\lambda_{i_k},
		\end{align*}
		supported in the g$\accentset{\circ}{a}$rdin cone $\Gamma_k=\set{\lambda\in\mathbb{R}^n:S_l(\lambda)>0,\: l=1,\ldots, k}$. 
		\item The inverse of the $k$-th harmonic sum $\left(\sum\limits_{1\leq i_1<\ldots<i_k\leq n}\dfrac{1}{\lambda_{i_1}+\ldots+\lambda_{i_k}}\right)^{-1}$ supported in $\Gamma=\set{\lambda\in\mathbb{R}^n: \lambda_1+\ldots+\lambda_k>0}$, where $\lambda_1\leq\ldots\leq\lambda_n$ has been imposed. 
		\item Any $1$-homogeneous symmetric combination of the above functions. 
	\end{itemize}
	We note that by removing hypothesis \ref{d}, the Hessian quotients functions $Q_{k,l}=\left(\dfrac{S_k}{S_l}\right)^{\frac{1}{k-l}}$ supported in $\Gamma_k$ can be included in this class of functions. We refer the reader to \cite{chow2020extrinsic} for an introductory book in this area of mathematics. 
\end{example}

Recall that in this paper  we focused in solution to \eqref{gamma-flow} that evolves under translations in a fixed unitary direction. This means that the $1$-parameter family of immersion  is given (up to tangential diffeomorphism) by
\begin{align*}
	F(x,t)=F_0(x)+vt,
\end{align*}
where $F_0$ is the initial immersion and $v\in\Sp^{n}$ is fixed. This type of solution is know as translating soliton of the $\gamma$-flow, or $\gamma$-translator for short.
\newline

We note that every time slice $M_t$ of a  $\gamma$-translators satisfies the equation
\begin{align}\label{gamma-trans}
	\gamma(\lambda)=\pI{\nu,v},
\end{align}
and after a translation and rotation, we may only consider Equation \eqref{gamma-trans} with $v=e_{n+1}$, since $M_t$ is invariant under euclidean isometries.
\newline

Finally, $\gamma$-translators has been widely studied when $\gamma=H$. We refer the reader to \cite{Paco1} for a complete introduction to $H$-translators in $\rr^{n+1}$ and its relation with singularities with the mean curvature flow and minimal hypersurface theory.

On the other hand,  for general curvature functions there are several works under different additional assumptions on the curvature function $\gamma$. We refer  the reader to \cite{andrews_2004} for a general behavior under $\gamma$-flows when $\gamma$ is $1$-homogenous and the initial data is compact, to \cite{langford2020sharp} and \cite{Lynch2021uniqueness} for the asymptotic analysis under singularities of these flows. 
\newline

The first result of this paper is about closed $\gamma$-translator which reads as follows.
\begin{theorem}\label{T1}
			Let $\Gamma=\set{\lambda\in\rr^n:\gamma(\lambda)>0}$ and assume that  $\gamma:\Gamma\to(0,\infty)$ satisfies properties \ref{a)}-\ref{c)}. Then, there is no closed immersed $\gamma$-translator in $\rr^{n+1}$ such that its principal curvatures belong to the cone $\Gamma$.
\end{theorem}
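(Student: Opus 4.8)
The plan is to argue by contradiction using a maximum-point argument at the "top" of a closed translator, exactly as in the classical case $\gamma = H$. Suppose $M = F(M)$ is a closed immersed $\gamma$-translator with $\lambda(x) \in \Gamma$ for all $x$, satisfying $\gamma(\lambda) = \pI{\nu, e_{n+1}}$. Since $M$ is compact, the height function $u := \pI{F, e_{n+1}}$ attains a maximum at some point $p \in M$. First I would record the standard first-order condition: at $p$, the tangent plane is horizontal, so $\nu(p) = \pm e_{n+1}$; I would check the sign by comparing with an exterior sphere touching $M$ at $p$ from above, which forces $\nu(p) = -e_{n+1}$ (the inward normal points down at the top point), whence $\pI{\nu(p), e_{n+1}} = -1 < 0$. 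But the translator equation gives $\gamma(\lambda(p)) = \pI{\nu(p), e_{n+1}} = -1$, contradicting hypothesis \ref{a)}, namely that $\gamma$ takes values in $(0,\infty)$. So already the first-order obstruction produces the contradiction; no second-order analysis is needed here, and properties \ref{b)}, \ref{c)} are not used for this particular statement.

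The only genuinely delicate point is justifying the sign of $\nu(p)$ for an \emph{immersed} (not embedded) hypersurface, since there need not be an interior/exterior region to appeal to. To handle this cleanly I would instead set $f := \pI{F, e_{n+1}} : M \to \rr$, note that at an interior maximum $p$ one has $\nabla f(p) = 0$ and $\nabla^2 f(p) \le 0$. The identity $\nabla f = e_{n+1}^\top$ (tangential projection of $e_{n+1}$) gives $e_{n+1} = \pI{\nu(p),e_{n+1}}\,\nu(p)$ at $p$, so $\pI{\nu(p), e_{n+1}} = \pm 1$. For the Hessian, differentiate again: $\nabla^2 f = \pI{\nu, e_{n+1}}\, h$, where $h$ is the second fundamental form with respect to $\nu$. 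Thus $\pI{\nu(p),e_{n+1})\, h(p) \le 0$ as a bilinear form. If $\pI{\nu(p), e_{n+1}} = -1$ we are done immediately as above. If instead $\pI{\nu(p), e_{n+1}} = +1$, then $h(p) \le 0$, so all principal curvatures $\lambda_i(p) \le 0$, i.e. $\lambda(p) \in \overline{-\Gamma_+}$; combined with $\lambda(p) \in \Gamma$ and the assumption $\Gamma_+ \subset \Gamma$, I would argue that $\gamma(\lambda(p)) = 1 > 0$ together with monotonicity \ref{c)} forces a contradiction — indeed by \ref{c)} the function $\gamma$ is nondecreasing along the direction $(1,\dots,1)$, so $0 < \gamma(\lambda(p)) \le \gamma(0^+)$, and by continuity/homogeneity-type scaling $\gamma$ cannot stay bounded below by $1$ as $\lambda(p)$ is pushed toward the boundary where $\tilde\gamma = 0$.

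Here the cleanest route avoiding case analysis is: by \ref{c)}, $\gamma$ is strictly increasing in each entry, so for $\lambda(p) \le 0$ (all entries) we have, for any $\mu \in \Gamma_+$ with $\mu \ge \lambda(p)$ entrywise, $\gamma(\lambda(p)) < \gamma(\mu)$ — this does not yet give a bound; better, push the other way: since all $\lambda_i(p)\le 0$ and $\Gamma$ is an open cone containing such a point, the ray $t\mapsto t\lambda(p)$, $t\to 0^+$, stays in $\overline{\Gamma}$ and monotonicity forces $\gamma(t\lambda(p)) \le \gamma(\lambda(p))$ to be nonincreasing as $t$ decreases only if $\lambda(p) \ge 0$, which it is not; so instead use homogeneity \ref{b)}: $\gamma(t\lambda(p)) = t^\alpha \gamma(\lambda(p)) \to 0$, yet by \ref{c)} applied along decreasing $t$ the values should not increase past $\gamma(\lambda(p))$ — since $\alpha > 0$ this is consistent, so that argument alone is inconclusive and I would fall back on the touching-sphere / local graph comparison to rule out $\pI{\nu(p),e_{n+1}} = +1$: near $p$, $M$ is a graph $x_{n+1} = w(x')$ over its horizontal tangent plane with $w$ having an interior maximum, so the downward-pointing choice is the one with $\pI{\nu, e_{n+1}} < 0$; consequently $\pI{\nu(p), e_{n+1}} = -1$ and the contradiction with $\gamma > 0$ stands. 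I expect the write-up's main subtlety to be precisely this orientation bookkeeping for immersions, which is routine but must be stated carefully.
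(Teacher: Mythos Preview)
Your first-order argument is circular. The translator equation $\gamma(\lambda)=\pI{\nu,e_{n+1}}$ together with the hypothesis $\gamma:\Gamma\to(0,\infty)$ already forces $\pI{\nu,e_{n+1}}>0$ at every point; the orientation of $\nu$ is \emph{determined} by the equation, not free for you to pick by a touching-sphere or local-graph heuristic. So at any critical point of the height function you necessarily have $\nu(p)=+e_{n+1}$ and $\gamma(\lambda(p))=1$, and there is no contradiction with $\gamma>0$. Your fallback ``downward-pointing choice'' is simply a different orientation, not the one appearing in the equation.

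Your second-order route is the right one, and you abandoned it one line too early. At the extremum where the Hessian forces all $\lambda_i(p)\le 0$, Euler's identity for the $\alpha$-homogeneous function $\gamma$ (this is exactly property \ref{b)}) gives
\[
\alpha\,\gamma(\lambda(p))=\sum_i \lambda_i(p)\,\frac{\partial\gamma}{\partial\lambda_i}(\lambda(p))\le 0,
\]
the inequality coming from $\lambda_i\le 0$ and $\partial\gamma/\partial\lambda_i>0$ (property \ref{c)}); this contradicts $\gamma(\lambda(p))=1$. Equivalently, for $t\in(0,1)$ each entry of $t\lambda(p)$ is \emph{larger} than the corresponding entry of $\lambda(p)$ (you had this direction reversed), so monotonicity gives $\gamma(t\lambda(p))\ge\gamma(\lambda(p))$ while homogeneity gives $\gamma(t\lambda(p))=t^{\alpha}\gamma(\lambda(p))<\gamma(\lambda(p))$. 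Thus properties \ref{b)} and \ref{c)} are genuinely needed, contrary to your claim. The paper packages exactly this computation as the elliptic equation $\Delta_\gamma u-\alpha|\nabla u|^2+\alpha=0$ for the height function (Euler's identity produces the $\alpha$) and reads off the contradiction at an interior minimum, where ellipticity of $\Delta_\gamma$ (from \ref{c)}) gives $\Delta_\gamma u\ge 0$ while the equation gives $\Delta_\gamma u=-\alpha<0$.
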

This result is a well know fact for $H$-translators, since they are minimal hypersurfaces in $(\rr^{n+1},e^{\pI{x,e_{n+1}}}dx^2)$. 
\begin{remark}
As an easy consequence of Theorem \ref{T1}, it follows that there is no complete totally umbilical strictly convex $\gamma$-translator in $\rr^{n+1}$ without boundary. It remains open if it can be removed the completeness hypothesis on the $\gamma$-translator.
\end{remark}

Next, we  develop an interior and a boundary tangential principle for $\gamma$-translators in $\rr^{n+1}$ which reads as follows.
\begin{theorem}\label{T2}
	Let $\Sigma_1,\Sigma_2\subset\rr^{n+1}$ be two embedded connected $\gamma$-translators  such that
	\begin{enumerate}
		\item $\gamma:\Gamma\to[0,\infty)$ satisfies properties \ref{a)}-\ref{d} in $\Gamma_+$ with $\alpha>0$.
		\item $\Sigma_1$ is strictly convex (i.e.: $\lambda\in\Gamma_+$ in $\Sigma_1$).
		\item $\Sigma_2$ is convex (i.e.: $\lambda\in\overline{\Gamma}_+$ in $\Sigma_2$).
	\end{enumerate}
	Then,
	\begin{enumerate}[a)]
	\item (\textbf{Interior tangential principle}) Assume that there exists an interior point $p\in \Sigma_1\cap \Sigma_2$ such that the tangent spaces coincide at $p$. If $\Sigma_1$ lies at one side of $\Sigma_2$, then both hypersurfaces coincide.
	
	\item (\textbf{Boundary tangential principle}) Assume that the boundaries $\partial \Sigma_i$ lie in the same hyperplane $\Pi$ and the intersection of $\Sigma_i$ with $\Pi$ is transversal. If $\Sigma_1$ lies at one side of $\Sigma_2$ and there exist $p\in \partial \Sigma_1\cap \partial \Sigma_2$ such that the tangent spaces to $\Sigma_i$ and $\partial\Sigma_i$ coincide, then both hypersurfaces coincide. 
	\end{enumerate}
\end{theorem}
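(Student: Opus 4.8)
The plan is to reduce both statements to the strong maximum principle (and the Hopf boundary-point lemma) for a suitable uniformly elliptic second-order operator obtained by linearizing the translator equation \eqref{gamma-trans}. First I would work locally near the common point $p$: since the tangent spaces to $\Sigma_1$ and $\Sigma_2$ agree at $p$ and $\Sigma_1$ lies on one side of $\Sigma_2$, I can choose Euclidean coordinates so that $T_p\Sigma_i$ is horizontal and write both hypersurfaces, in a neighborhood of $p$, as graphs $x_{n+1}=u_i(x)$ over a common domain $\Omega\subset\rr^n$, with $u_1\le u_2$ (or $u_1\ge u_2$) and $u_1(0)=u_2(0)$, $Du_1(0)=Du_2(0)$. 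In graphical form, equation \eqref{gamma-trans} becomes a quasilinear elliptic PDE $\mathcal{G}(D^2u,Du)=0$, where ellipticity comes precisely from hypothesis \ref{c)} (each $\partial\gamma/\partial\lambda_i>0$ makes the linearized operator positive definite) together with strict convexity of $\Sigma_1$, which guarantees $\lambda\in\Gamma_+$ so that $\gamma$ is smooth and strictly monotone along the relevant segment; this is the point where assumption (2) is used and where the continuous-but-only-$C^0$ extension in \ref{d} at $\partial\Gamma$ would otherwise cause trouble.

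Next I would subtract the two equations and apply the mean value theorem along the segment joining $(D^2u_2,Du_2)$ to $(D^2u_1,Du_1)$: the difference $w:=u_2-u_1\ge 0$ satisfies a linear equation $a^{ij}(x)D_{ij}w + b^i(x)D_i w = 0$ with $a^{ij}$ uniformly elliptic on a small enough neighborhood (uniform ellipticity holds because $\Sigma_1$ is strictly convex, so its principal curvatures stay in a compact subset of $\Gamma_+$ near $p$, and by continuity the same is true along the interpolation for $\Sigma_2$ sufficiently close). For part a), $w\ge 0$, $w(0)=0$, and $0$ is an interior minimum, so the strong maximum principle forces $w\equiv 0$ on a neighborhood of $p$; a standard connectedness/continuation argument (the coincidence set is open and closed in $\Sigma_1\cap\Sigma_2$, using that both are connected and embedded) then yields $\Sigma_1=\Sigma_2$ globally. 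For part b), the extra hypothesis that $\partial\Sigma_1$ and $\partial\Sigma_2$ lie in the same hyperplane $\Pi$ with transversal intersection, and that $T_p\partial\Sigma_i$ also coincide, lets me set up the graphs over a domain $\Omega$ with a portion of $\partial\Omega$ corresponding to $\Pi$; transversality ensures this boundary portion is a genuine $C^1$ hypersurface and that $w$ vanishes on it together with the right regularity up to the boundary. Since $w\ge 0$ vanishes at the boundary point $p$ with, a priori, nonzero interior values, the Hopf lemma would give $\partial w/\partial n(p)<0$; but the coincidence of the tangent spaces of $\Sigma_i$ at $p$ forces $Dw(p)=0$, a contradiction unless $w\equiv 0$, and again continuation gives $\Sigma_1=\Sigma_2$.

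The main obstacle I anticipate is not the maximum principle itself but the careful verification that the linearized operator is genuinely (locally uniformly) elliptic with bounded coefficients, i.e. controlling the interpolated curvature vectors so they stay inside $\Gamma_+$ and away from $\partial\Gamma$ and $\infty$. This hinges on: (i) strict convexity of $\Sigma_1$ giving an open neighborhood of its curvature values inside $\Gamma_+$; (ii) the one-sidedness $u_1\le u_2$ together with $C^1$-tangency at $p$ bounding how far $D^2u_2$ can be from $D^2u_1$ near $p$ — here some care is needed because a priori $\Sigma_2$ is only convex, so its curvatures could approach $\partial\Gamma_+$, but near $p$ the one-sided tangency pins $D^2u_2(0)\ge D^2u_1(0)$ (as symmetric matrices) which, combined with $\Sigma_2$ being a translator with $\langle\nu,e_{n+1}\rangle$ near $1$, keeps $\lambda(\Sigma_2)$ in a compact subset of $\Gamma_+$ on a small ball. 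A secondary technical point for part b) is ensuring enough boundary regularity of $w$ to invoke Hopf's lemma; transversality of $\Sigma_i$ with $\Pi$ is exactly what provides a $C^{1,\alpha}$ (indeed smooth) boundary portion and an interior sphere condition at $p$.
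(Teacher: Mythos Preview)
Your proposal is correct and follows essentially the same route as the paper: write both hypersurfaces locally as graphs $u_1,u_2$, linearize the translator equation along the convex combination $u_s=(1-s)u_1+su_2$ to obtain a linear uniformly elliptic equation for $v=u_2-u_1$, and then invoke the strong maximum principle (interior case) and Hopf's boundary-point lemma (boundary case), followed by a connectedness/continuation argument. The one place where the paper is slightly cleaner than your discussion of ellipticity is that it simply notes $D^2u_s=(1-s)D^2u_1+sD^2u_2$ is positive definite for every $s\in(0,1)$ (positive definite plus nonnegative semidefinite), and since the mean-value parameter $s_0$ lies in the open interval, ellipticity at $u_{s_0}$ is automatic---so no separate argument forcing $\Sigma_2$ to be strictly convex near $p$ is needed.
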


Next, we consider an important class of $\gamma$-translators in $\rr^{n+1}$ called ``bowl''-type solutions founded in \cite{Shati}. These hypersurfaces are complete rotationally symmetric strictly convex graphs that are defined in a ball of radius $r_0:=\sqrt[\alpha]{\gamma(1,\ldots,1)}$ or are entire (i.e.: defined in all $\rr^n$) which are asymptotic\footnote{When $\gamma(0,1,\ldots,1)>0$.} to $$x_{n+1}=\frac{1}{(\alpha+1)\gamma(0,1,\ldots,1)}|x|^{\alpha+1}+o(|x|^{\alpha+1}),\mbox{ as }|x|\to\infty$$ 

In fact, in Theorems 1.3 and 1.4 in \cite{Shati}, the author characterized this dichotomy in terms of the curvature function $\gamma$: 
\begin{itemize}
	\item If $\gamma(0,1,\ldots,1)>0$, then the ``bowl''-type solution is entire.
	\item\label{Shati cond}  If $\gamma(0,1,\ldots,1)=0$, and, under the constraint $\gamma(x,y,\ldots,y)=1$, $x\to L>0$ as $y\to\infty$, then the ``bowl''-type solution is defined in a ball. 
	\newline
	In addition, if $L=0$, then we have the following extra conditions:
	\begin{itemize}
		\item If $x=O(y^{-(2\alpha-1)})$, then the ``bowl''-type solution is entire.
		\item  If there are positive constants $C$ and $k\in (0,2\alpha-1)$ such that $x\geq Cy^{-k}$ for big enough $y$, then the ``bowl''-type solution is defined in a ball.
	\end{itemize}
\end{itemize}

As a corollary of the tangential principle, we show a non-existence results for entire $\gamma$-translators in $\rr^{n+1}$.
\begin{corollary}\label{C2}
	Assume that $\gamma: \Gamma\to [0,\infty)$ satisfies Properties \ref{a)}-\ref{d} with $\alpha>\frac12$.Then, if the ``bowl''-type $\gamma$-translator is defined in a ball, then there cannot exist a complete convex entire $\gamma$-translators in $\rr^{n+1}$. 
\end{corollary}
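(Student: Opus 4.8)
The plan is to argue by contradiction, using a vertically translated copy of the ``bowl''-type solution as a barrier that meets a hypothetical entire translator from above, and then to invoke the interior tangential principle, Theorem~\ref{T2}(a), at the contact point. So suppose that $\Sigma_2$ is a complete convex entire $\gamma$-translator. Being entire, $\Sigma_2$ is the graph of a smooth function $u\colon\rr^n\to\rr$; since $\gamma\ge 0$, equation \eqref{gamma-trans} forces the unit normal realising $\pI{\nu,e_{n+1}}=\gamma(\lambda)\ge 0$ to point upward, and then convexity of $\Sigma_2$ (that is, $\lambda\in\overline{\Gamma}_+$) says exactly that $u$ is a convex function. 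Let $\Sigma_1$ be the ``bowl''-type $\gamma$-translator, which by hypothesis is defined over a ball; after a horizontal translation I would write $\Sigma_1=\set{(x,w(\abs{x})):\abs{x}<r_0}$ with $r_0=\sqrt[\alpha]{\gamma(1,\dots,1)}$, where $w$ is smooth and $w(r)\to+\infty$ as $r\to r_0^-$ --- this blow-up is precisely the content of ``defined in a ball, asymptotic to a vertical cylinder''. Recall that $\Sigma_1$ is strictly convex ($\lambda\in\Gamma_+$ along it) and that $\gamma$-translators are invariant under vertical translations.

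Next I would build the barrier. Consider $\varphi_s(x):=w(\abs{x})+s-u(x)$ on $B_{r_0}(0)$, with $s\in\rr$ a parameter. Because $w$ blows up at $\partial B_{r_0}(0)$ while $u$ is bounded on the compact set $\overline{B}_{r_0}(0)$, one has $\varphi_s(x)\to+\infty$ as $x\to\partial B_{r_0}(0)$, so $\varphi_s$ attains an interior minimum, and $\min_{B_{r_0}(0)}\varphi_s=s+\mu$ for a finite constant $\mu$ that does not depend on $s$. Choosing $s=s^\ast:=-\mu$ gives $\varphi_{s^\ast}\ge 0$ on $B_{r_0}(0)$ with equality at some interior point $x^\ast$, where moreover $\nabla\varphi_{s^\ast}(x^\ast)=0$. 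Geometrically, the vertically translated bowl $\Sigma_1^\ast:=\set{(x,w(\abs{x})+s^\ast):\abs{x}<r_0}$ --- still a strictly convex $\gamma$-translator --- lies entirely on one side of $\Sigma_2$, namely above it, and touches $\Sigma_2$ at the interior point $p=(x^\ast,u(x^\ast))$, where both the tangent hyperplanes and the upward unit normals agree.

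Finally I would apply Theorem~\ref{T2}(a) to $\Sigma_1^\ast$ (strictly convex) and $\Sigma_2$ (convex): they are embedded, connected, and in the required one-sided contact at an interior point, so the conclusion is $\Sigma_1^\ast=\Sigma_2$. This is absurd, since $\Sigma_1^\ast$ projects onto the proper open subset $B_{r_0}(0)\subsetneq\rr^n$ whereas $\Sigma_2$ projects onto all of $\rr^n$; hence no complete convex entire $\gamma$-translator can exist. The comparison step itself is short once the tangential principle is in hand, and the contact is automatically first order and interior thanks to the cylindrical blow-up of $w$; the points I expect to require the most care --- and where the hypothesis $\alpha>\tfrac12$ should enter --- are in confirming that a complete convex entire $\gamma$-translator is necessarily graphical with the upward orientation and free of degenerate (e.g.\ cylindrical) behaviour, this being the regime in which the ``defined in a ball'' branch of the \cite{Shati} dichotomy is consistent (recall the constraint $k\in(0,2\alpha-1)$).
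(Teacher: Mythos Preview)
Your argument is correct and follows essentially the same route as the paper: assume an entire convex $\gamma$-translator exists, slide the bowl-type solution vertically until first contact, and apply the interior tangential principle (Theorem~\ref{T2}(a)) to force the two hypersurfaces to coincide, contradicting that one is entire and the other is not. Your version is in fact more careful than the paper's, since you make the barrier construction explicit via the function $\varphi_s$ and the choice $s^\ast=-\mu$.

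One small correction to your closing remarks: the hypothesis $\alpha>\tfrac12$ is not used to guarantee graphicality or to rule out cylindrical degeneracy of the entire translator (``entire'' already means graphical over $\rr^n$ by definition, and convexity is assumed). Rather, $\alpha>\tfrac12$ enters through the Appendix proposition, which ensures the bowl is $\mathcal{C}^2$-asymptotic to the cylinder $\Sp^{n-1}(r_0)\times\rr$; this is what guarantees the blow-up $w(r)\to+\infty$ as $r\to r_0^-$ that you use to force $\varphi_s$ to attain an interior minimum. Your proof invokes this blow-up correctly, you have just mislocated its source.
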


The main result of this paper is a uniqueness theorem for complete $\gamma$-translators in $\rr^{n+1}$ which are strictly convex graphs  defined over ball which reads as follows.
\begin{theorem}\label{T3}
	Let $\Sigma\subset\rr^{n+1}$ be a complete $\gamma$-translator such that 
	\begin{enumerate}
		\item $\gamma:\Gamma\to (0,\infty)$ satisfies properties \ref{a)}-\ref{d} in $\Gamma_+$ with $\alpha>0$. 
		\item $\Sigma$ is strictly convex graph over a ball $B_r^n(0)\subset\rr^{n}$.
		\item$\Sigma$ posses a single end $\mathcal{C}^2$-asymptotic to the cylinder $\Sp^{n-1}(r)\times\rr$ (i.e.: the principal curvatures of $\Sigma$ satisfy
		\begin{align*}
			\min\set{\lambda_i(p):i=1,\ldots,n}=&\lambda_1(p)\to 0,
			\\
			\forall i\in\set{2,\ldots,n},\:&\lambda_i(p)\to \dfrac{1}{r},
		\end{align*}
	as $|p|\to\infty$). 
	\end{enumerate}
 Then $\Sigma$ is rotationally symmetric with respect the $x_{n+1}$-axis. 
\end{theorem}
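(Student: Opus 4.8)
The plan is to run Alexandrov's moving plane method in every horizontal direction, with the tangential principle of Theorem \ref{T2} playing the role usually played by the strong maximum principle and the Hopf lemma. Since $\langle\nu,e_{n+1}\rangle=\gamma(\lambda)>0$ and $\Sigma$ is strictly convex, $\Sigma$ is the graph of a smooth strictly convex function $u:B_r^n(0)\to\rr$ opening upward, and it solves a quasilinear elliptic PDE with no zeroth order term (write $\gamma$ of the principal curvatures of the graph equal to $\langle\nu,e_{n+1}\rangle=(1+|Du|^2)^{-1/2}$). Completeness together with convexity of $u$ forces $u(x)\to+\infty$ as $|x|\to r$ (a convex graph over $B_r^n(0)$ that stayed bounded near a boundary point would be incomplete there), and the asymptotic hypothesis refines this: the horizontal slices $\Sigma\cap\{x_{n+1}=t\}$ are convex hypersurfaces converging in $\mathcal C^2$ to $\Sp^{n-1}(r)$ centred at the origin as $t\to\infty$. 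Since any horizontal direction may be taken as $e_1$, it suffices to prove that $u$ is symmetric across $\{x_1=0\}$: carrying this out in every horizontal direction makes $u$ invariant under all reflections fixing the $x_{n+1}$-axis, hence rotationally symmetric about it.

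\textbf{Set-up and start.} For $\mu\in(0,r)$ put $\Sigma(\mu)=\{x\in B_r^n(0):x_1>\mu\}$, let $u_\mu(x)=u(2\mu-x_1,x')$ be the reflection across $\Pi_\mu=\{x_1=\mu\}$ (well defined on $\Sigma(\mu)$, since that reflection maps $\Sigma(\mu)$ into $B_r^n(0)$), and set $v_\mu=u_\mu-u$. Because $\Sigma$ is convex, each line parallel to $e_1$ meets it in at most two points, so the reflected piece $\Sigma^*_\mu$ of $\Sigma\cap\{x_1\ge\mu\}$ is again a strictly convex graph and, being a rigid motion of part of $\Sigma$, again a $\gamma$-translator; the inequality $v_\mu\le0$ says exactly that $\Sigma^*_\mu$ lies above $\Sigma$ over their common shadow, and $v_\mu$ solves a linear, zeroth-order-free elliptic equation $Lv_\mu=0$. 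For $\mu$ close to $r$ the shadow of $\Sigma(\mu)$ shrinks to a neighbourhood of $(r,0,\dots,0)$, and there $v_\mu\le0$ follows purely from convexity of the slices $s\mapsto u(s,x')$: for such $\mu$ the reflected abscissa $2\mu-x_1$ lies to the right of the minimiser $s_0(x')$, where $u(\cdot,x')$ is increasing, while $x_1>2\mu-x_1$. The same slice argument, using that $s_0(x')$ stays strictly inside $(-\rho(x'),\rho(x'))$ with $\rho(x')=\sqrt{r^2-|x'|^2}$, shows $v_\mu\le0$ near the ``equator'' $\Pi_\mu\cap\partial B_r^n(0)$ for every $\mu$; and away from that equator $v_\mu\to-\infty$ as one approaches $\partial B_r^n(0)$, since there $u\to\infty$ while $u_\mu$ stays finite.

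\textbf{Moving the plane.} Let $\mu_0=\inf\{\mu\in(0,r):v_{\tilde\mu}\le0\text{ on }\Sigma(\tilde\mu)\text{ for all }\tilde\mu\in[\mu,r)\}$; by continuity, together with the end control of the previous step and the asymptotic cylinder, $v_{\mu_0}\le0$. If $\mu_0>0$, first note $v_{\mu_0}\not\equiv0$: otherwise, letting $x\to\partial B_r^n(0)$ inside $\Sigma(\mu_0)$ transversally to the equator would force $u=u_{\mu_0}=+\infty$ at an interior point of $B_r^n(0)$ (equivalently, $B_r^n(0)$ is not symmetric across $\Pi_{\mu_0}$). Hence, by the strong maximum principle, $v_{\mu_0}<0$ in the open cap. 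Now if $\Sigma^*_{\mu_0}$ touches $\Sigma$ either at an interior point or at a point of $\Pi_{\mu_0}\cap B_r^n(0)$ where the tangent spaces to the two surfaces, and to their boundaries, agree, then the interior, respectively boundary, tangential principle of Theorem \ref{T2} — applicable since $\Sigma^*_{\mu_0}$ is strictly convex, $\Sigma$ is convex, one lies above the other, the intersection with $\Pi_{\mu_0}$ is transversal, and they are tangent at the touching point — forces $\Sigma^*_{\mu_0}=\Sigma$ on the connected cap $\Sigma(\mu_0)$, i.e.\ $v_{\mu_0}\equiv0$, a contradiction. Otherwise $v_{\mu_0}<0$ in the cap and, by the Hopf lemma, $\partial_{x_1}v_{\mu_0}<0$ along $\Pi_{\mu_0}\cap B_r^n(0)$; combining this with the strict interior negativity, with the uniform control near the end supplied by the asymptotic cylinder, and with a standard compactness argument, one gets $v_{\mu_0-\eps}\le0$ for some small $\eps>0$, contradicting the definition of $\mu_0$. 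Therefore $\mu_0=0$, so $u(-x_1,x')\le u(x_1,x')$ for $x_1>0$; running the identical argument in direction $-e_1$ gives the reverse inequality, so $u$ is symmetric across $\{x_1=0\}$, which by the first paragraph completes the proof.

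\textbf{Main obstacle.} The delicate point is the non-compactness of the caps $\Sigma(\mu)$: they escape to infinity along $\partial B_r^n(0)$, so the compactness needed to start the method, to pass to the limit $\mu\to\mu_0$, and above all to continue past a hypothetical $\mu_0>0$ is not available for free. This is exactly where the $\mathcal C^2$-asymptotic-to-a-rotationally-symmetric-cylinder hypothesis is used: near the end, $\Sigma$ and all its reflections $\Sigma^*_\mu$ are $\mathcal C^2$-close to the reflection-invariant model $\Sp^{n-1}(r)\times\rr$, which pins down the behaviour of $v_\mu$ there — in particular near the equator $\Pi_\mu\cap\partial B_r^n(0)$, where both $u$ and $u_\mu$ blow up — and keeps the ellipticity constants of $L$ bounded (the principal curvatures converge). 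Everything else — the convexity-based start, the replacement of the strong maximum principle by Theorem \ref{T2}, and the final contradiction from the asymmetry of $B_r^n(0)$ — is routine once this control at infinity is secured.
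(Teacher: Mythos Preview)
Your proposal is correct and follows essentially the same route as the paper: Alexandrov's moving-plane method across the vertical hyperplanes $\{x_1=\mu\}$, with Theorem~\ref{T2} supplying the interior and boundary tangency principles, and the $\mathcal C^2$-asymptotic cylinder hypothesis providing the control at infinity that replaces compactness. The one organizational difference is cosmetic: you compare the reflected and original pieces in the $e_{n+1}$-direction via the height difference $v_\mu=u_\mu-u$ on the cap $\Sigma(\mu)\subset B_r^n(0)$ (Gidas--Ni--Nirenberg style), whereas the paper compares them in the $e_1$-direction via the order relation ``$\Sigma_-(t)\le\Sigma_+^*(t)$'' after projecting to $\Pi=\{x_1=0\}$ (classical Alexandrov style); the paper also makes the end control explicit by splitting $\Sigma$ into a compact slab $K=\Sigma\cap\{x_{n+1}\le h\}$ and the asymptotic region $Z_h$, which is precisely what your ``standard compactness argument'' near the equator would unpack into. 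One small correction: the graph equation for $u$ is fully nonlinear, not quasilinear, but your use of it is unaffected since you only need that the difference of two solutions satisfies a linear uniformly elliptic equation with no zeroth-order term, which follows (as in the proof of Theorem~\ref{T2}) from strict convexity of all convex combinations $su+(1-s)u_\mu$.
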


Finally, by the uniqueness result exposed in \cite{Shati}, we obtain the following corollary.

\begin{corollary}\label{C3}
	Let $\Sigma$ be a $\gamma$-translator as in Theorem \ref{T3} with $\alpha>\dfrac{1}{2}$ such that the ``bowl''-type $\gamma$-translator is defined in a ball of radius $\sqrt[\alpha]{\gamma(1,\ldots,1)}$. Then, $\Sigma$ coincide with the ``bowl''-type $\gamma$-translator up to vertical translations.  
\end{corollary}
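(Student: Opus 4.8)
The plan is to combine the rigidity of Theorem~\ref{T3} with the classification of rotationally symmetric profiles from \cite{Shati}, so that the only genuinely new input needed here is Theorem~\ref{T3} itself. First I would apply Theorem~\ref{T3}: since $\Sigma$ is a complete, strictly convex $\gamma$-translator that is a graph over $B_r^n(0)$ with a single end $\mathcal{C}^2$-asymptotic to $\Sp^{n-1}(r)\times\rr$, that theorem gives that $\Sigma$ is rotationally symmetric about the $x_{n+1}$-axis. Hence $\Sigma$ is the graph of a radial function $x_{n+1}=u(|x|)$ with $u\colon[0,r)\to\rr$ smooth; smoothness across the axis forces $u'(0)=0$, strict convexity forces $u$ to be a strictly convex function (so the relevant unit normal is the upward one and the minimum of $u$ sits over the origin), and completeness together with the asymptotic-to-a-cylinder hypothesis forces $u'(\rho)\to\infty$ and $u(\rho)\to\infty$ as $\rho\to r$.

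Next I would reduce Equation~\eqref{gamma-trans} to an ODE for $u$. Inserting a radial graph into \eqref{gamma-trans} produces a second-order ODE on $(0,r)$ whose principal-curvature arguments are the radial curvature $u''/(1+(u')^2)^{3/2}$ and the $(n-1)$-fold repeated curvature $u'/(\rho\sqrt{1+(u')^2})$, with right-hand side $1/\sqrt{1+(u')^2}$. Evaluating at the vertex, where $u'(0)=0$, all principal curvatures equal the common value $u''(0)$, and the unit normal is vertical, yields $\gamma(u''(0),\dots,u''(0))=1$; by $\alpha$-homogeneity (Property~\ref{b)}) this gives $u''(0)=\gamma(1,\dots,1)^{-1/\alpha}=1/r_0$, with $r_0=\sqrt[\alpha]{\gamma(1,\dots,1)}$. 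Thus, after the vertical translation carrying $u(0)$ to the height of the vertex of the ``bowl''-type solution, the initial data $(u(0),u'(0),u''(0))$ of $u$ coincide with those of the ``bowl'' profile. Invoking the uniqueness statement of \cite{Shati} for this profile --- which applies precisely because $\alpha>\tfrac12$ and, by hypothesis, $\gamma$ lies in the regime in which the ``bowl''-type solution is defined over the ball of radius $r_0$ --- the two profiles agree on their common interval of definition, hence (by maximality) $r=r_0$ and $\Sigma$ is the ``bowl''-type $\gamma$-translator up to a vertical translation.

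The main obstacle lies not in the corollary but in its two inputs. Theorem~\ref{T3} carries the real weight, since it removes the a priori rotational-symmetry hypothesis. The remaining delicate point is the behaviour of the radial ODE at the singular endpoint $\rho=0$: one must know that smoothness of $\Sigma$ across the axis singles out $u''(0)=1/r_0$ as the only admissible datum, excluding a spurious family of non-smooth radial solutions, and that the uniqueness theorem of \cite{Shati} is applicable to profiles that are only known a posteriori to be complete graphs over a ball rather than ones constructed from the vertex outward. Both facts are contained in \cite{Shati}; once quoted, the argument above is immediate, the only computations being the short checks that $u'(0)=0$, that $u$ is convex with its minimum over the origin, and that $u''(0)=1/r_0$.
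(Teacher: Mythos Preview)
Your proposal is correct and follows essentially the same approach as the paper: apply Theorem~\ref{T3} to obtain rotational symmetry, then invoke the uniqueness results of \cite{Shati} to identify $\Sigma$ with the ``bowl''-type solution up to vertical translation. The paper's own proof is a terse two-line citation of Theorems~1.3--1.4 in \cite{Shati}; your version simply unpacks that citation by matching the initial data $u'(0)=0$, $u''(0)=\gamma(1,\dots,1)^{-1/\alpha}$ at the vertex and then appealing to ODE uniqueness, which is exactly what those theorems encode.
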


\begin{example}
We note that  Corollary \ref{C3} can be applied to translators of the curvature function $\gamma=HS_n$. Indeed, when we consider this function evaluated in the positive cone $\Gamma_+$ it satisfies properties \eqref{a)}-\eqref{d}.

Moreover, the ``bowl''-type solution is defined in the ball of radius $r=\sqrt[\alpha]{\gamma(1,\ldots,1)}=\sqrt[n+1]{n}$. To check this, we used the characterization given in \cite{Shati}, 
  \begin{align*}
  1=\gamma(x,y,\ldots,y)=x^2y^{n-1}+(n-1)xy^{n-1},
\end{align*}
and we see that 
\begin{align*} 
   x=\dfrac{1}{\sqrt{y^{n-1}+\dfrac{(n-1)^2y^{2n}}{4}}+\dfrac{(n-1)y^n}{2}}.
  \end{align*}
Therefore, $x\geq \dfrac{y^{-n}}{n}$ for big enough $y$, and since $0<n< 2n+1$, it follows that the ``bowl''-type solution is defined in a ball. 
\end{example}

The organization of this article goes as follows: In Section \ref{Sec: T1}, we prove Theorem \ref{T1}. In Section \ref{Sec: T2}, we show  the tangential principles: Theorem \ref{T2} and Corollary \ref{C2}. In Section \ref{Sec: T3}, we prove the uniqueness results of  Theorem \ref{T3} and Corollary \ref{C3}.  In Appendix we show that the ``bowl''-type $\gamma$-translator which is defined in a ball of radius $r_0$ is $\mathcal{C}^2$-asymptotic to the cylinder $\Sp^{n-1}(r_0)\times\rr$.  
\newline
\newline
\underline{\textbf{Acknowledgment:}} The author would like to thank F. Martín and M. Sáez for bringing this problem to his attention and for all the support they have provided. 

\section{Proof of Thoerem \ref{T1}}\label{Sec: T1}

Now we are going to prove that there is no closed $\gamma$-translator in $\rr^{n+1}$ such that $\gamma:\Gamma\to (0,\infty)$ satisfies properties \ref{a)}-\ref{c)} with $\alpha>0$, where $\Gamma=\set{\lambda\in\rr^n;\gamma(\lambda)>0}$, and the principal curvatures of the $\gamma$-translator belong to $\Gamma$. 
\newline
The proof is based in the form of the equation that satisfies the height function of the $\gamma$-translator. 

\begin{proof}[Proof of Theorem \ref{T1}]	
	Let us assume that $\Sigma$ is a compact $\gamma$-translator without boundary in $\rr^{n+1}$, and without loss of generality, we will also assume that $0\in\Sigma$.
	\newline	
	
	 Next, we choose normal coordinates centred at $p\in\Sigma$ given by $\set{e_i}_{i=1}^n\subset T_p\Sigma$. This means that $\set{e_i}$ is an orthornomal basis of eigenvalues of the shape operator of $\Sigma$ (i.e.: $h^i_j(p)=h_{ij}(p)=\lambda_i(p)\delta_j^i$ where $h^i_j$, $h_{ij}$ and $\delta_j^i$ are the coefficients of the shape operator, second fundamental form and delta Kronecker of $\Sigma$, respectively). 
	 \newline

 Then, it follows that the height function $u(p)=\pI{p,e_{n+1}}$ satisfies the following equations
\begin{align*}
	&\nabla_iu=\pI{e_i,e_{n+1}},\: |\nabla u|=\abs{e_{n+1}^{\top}},
	\\
	&\nabla_j\nabla_iu=\pI{\nabla_j e_i,e_{n+1}}=\pI{\nabla_je_i,\nu}\pI{\nu,e_{n+1}}=-h_{ij}\pI{\nu,e_{n+1}},
	\\
	&\Delta_{\gamma}u:=\dfrac{\partial\gamma}{\partial h_{ij}}\nabla_j\nabla_iu=-\alpha\gamma\pI{\nu,e_{n+1}}=-\alpha\pI{\nu,e_{n+1}}^2=\alpha(\abs{e_{n+1}^{\top}}^2-1).
\end{align*}
Here $(\cdot)^{\top}$ denotes the orthogonal projection to $T_p\Sigma$ in $\rr^{n+1}$. In addtion, in the third line we used property \ref{b)} to write the term $\dfrac{\partial\gamma}{\partial h_{ij}} h_{ij}=\alpha\gamma$. We note that, by property \ref{c)}, it follows that the operator $\Delta_{\gamma}$ is elliptic when the principal curvatures belong to $\Gamma$. 	 
\newline
	
In consequence, $u$ satisfies the equation
\begin{align*}
	\Delta_{\gamma}u-\alpha\abs{\nabla u}^2+\alpha=0.
\end{align*} 	

Finally,  by compactness and continuity, $u$ must attains an interior minimum (recall that $\partial \Sigma=\emptyset$). However, this is impossible since $\Delta_\gamma u\geq 0$ and $\nabla u=0$ at the point where the minimum of $u$ is reached. Therefore, there cannot exist any closed $\gamma$-translator such that its principal curvatures belong to $\Gamma$.
\end{proof}

\section{Proof of Theorem \ref{T2}}\label{Sec: T2}

The proof of Theorem \ref{T2} is inspired by the one given in \cite{moller_2014}  when $\gamma=H$.
 \begin{proof}[Proof of Thoerem \ref{T2}]
Firstly, we mention that along this proof  $D_i,D_{ij}$ will denote the derivatives with respect to a local frame for each hypersurface. Moreover, we will denote by $B^i_r(x)$ a open ball of radius $r$ centered at  $x\in\rr^{i}$. 
\newline

Let $p\in \Sigma_1\cap \Sigma_2$ be an interior point such that $T_p\Sigma_1=T_p\Sigma_2$ and $\Sigma_1$ lies locally at one side of $\Sigma_2$. Then, after a rotation and a translation, there exists $r>0$ such that each $M_i\cap B^{n+1}_r(p)$ is the graph of a smooth function $u_i:B^n_r(0)\subset\set{x_{n+1}=0}\to\rr$, where $p=(0,u_i(0))$. Therefore, since $\Sigma_1$ lies at one side of $\Sigma_2$, we may assume that $u_1>u_2$ in $B_r^n(0)\setminus\set{0}$ and $u_1(0)=u_2(0)$.
\begin{figure}
   \centering
    \includegraphics[width=0.9\columnwidth]{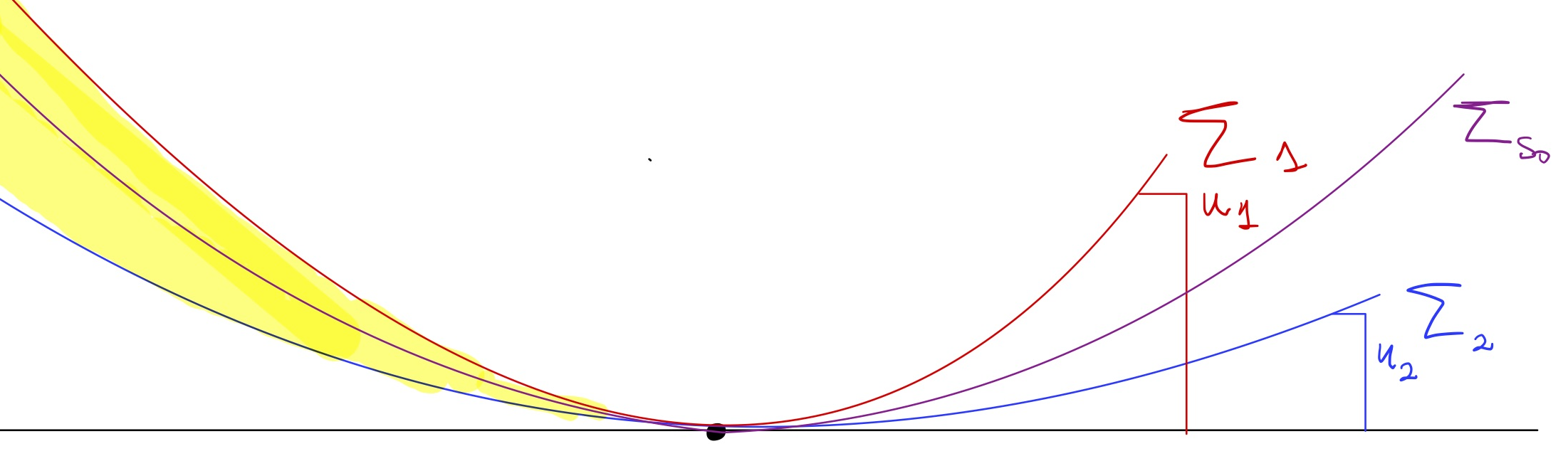}
    \caption{Picture of the proof of Theorem \ref{T1}.}
    \label{fig: convexity}
\end{figure}

Next, we consider a convex combination between $u_1$ and $u_2$ given by
\begin{align*}
	u_{s}=(1-s)u_1+su_2,\mbox{ for }s\in[0,1].	
\end{align*}	
It is not hard to see that for each $s\in (0,1)$, the graph of $u_s:B_r(0)\to\rr$ is strictly convex. In fact, this holds since the convex combination of a positive definite matrix with a positive semi-definite matrix is positive definite. Here the involved matrices are the shape operators of the graphs of $u_1$ and $u_2$, respectively. 

In particular, since for each $s\in (0,1)$, the principal curvatures of the graph $u_s$ lie in the positive cone $\Gamma_+=\set{\lambda\in\rr^n:\:\lambda_i>0}$, it follows that  $u_s$ can be evaluated in the functional  
\begin{align*}
	E(s)=\gamma(h^i_j(s))-\pI{\nu(s),e_{n+1}}.
\end{align*}
Recall that the $s$ dependence in $E(s)$ is related to the coefficients of the shape operator and the unit normal vector of the graph of $u_s$.

Furthermore,  by the mean value theorem together with the fact that $E(1)=E(0)=0$, it follows that
\begin{align*}
	0=E(1)-E(0)=\dfrac{\partial E}{\partial s}(s_{0}),
\end{align*}
for some $s_0\in (0,1)$ (see Fig \ref{fig: convexity}.). 

On the other hand, we can explicitly calculate the term $\dfrac{\partial E}{\partial s}(s)$. Indeed, by denoting $v=u_2-u_1$ and $\dot{\gamma}^{\:ij}(s)=\dfrac{\partial\gamma}{\partial h^i_j}(h^i_j(s))$, we have
\begin{align*}
	\dfrac{\partial E}{\partial s}(s)=&\dot{\gamma}^{\:ij}(s)\dfrac{\partial h^i_{j}}{\partial s}(s)-\dfrac{\partial}{\partial s}\dfrac{1}{\sqrt{1+|Du_s|^2}}
	\\
	=&\dot{\gamma}^{\:ij}(s)\dfrac{\partial}{\partial s}\left[\left(\dfrac{\delta_{ik}}{\sqrt{1+|Du_s|^2}}-\dfrac{D_iu_sD_ku_s}{(1+|Du_s|^2)^{3/2}}\right)D_{kj}u_s\right]+
	\dfrac{\pI{Du_s,Dv}}{(1+|Du_s|^2)^\frac{3}{2}}
	\\
	=&\dot{\gamma}^{\:ij}(s)\left[\left(-\dfrac{\delta_{ik}\pI{Du_s,Dv}}{(1+|Du_s|^2)^{\frac{3}{2}}}-\dfrac{D_ivD_ku_s+D_kvD_iu_s}{(1+|Du_s|^2)^{\frac{3}{2}}}+3\dfrac{D_iu_sD_ku_s\pI{Du_s,Dv}}{(1+|Du_s|^2)^{\frac{5}{2}}} \right)D_{kj}u_s\right.
	\\
	&\left.+\left(\dfrac{\delta_{ik}}{\sqrt{1+|Du_s|^2}}+\dfrac{D_iu_sD_ku_s}{(1+|Du_s|^2)^{\frac32}}\right)D_{kj}v\right]+\dfrac{\pI{Du_s,Dv}}{(1+|Du_s|^2)^{\frac{3}{2}}}.
\end{align*}

We claim that $v=0$ in $B_r(0)$, and we will argue this by contradiction (i.e.: we will assume that $v$ is not a constant function such that $v(0)=0$ and $v(x)\leq 0$ in $B_r^n(0)$).

Firstly, we note that by continuity, $v$ reaches a maximum in $\overline{B_r^n(0)}$. Let us assume first that the maximum is reached at $0$. Recall that $v$ satisfies the linear elliptic PDE given by
\begin{align*}
	0=&\dot{\gamma}^{\:ij}(s_0)\left(\dfrac{\delta_{ik}}{\sqrt{1+|Du_{s_0}|^2}}-\dfrac{D_iu_{s_{0}}D_ku_{s_{0}}}{(1+|Du_{s_{0}}|^2)^{\frac32}}\right)D_{kj}v
	\\
	&\hspace{-.7cm}+\pI{\left[\dot{\gamma}^{\:ij}(s_0)\pare{-\dfrac{\delta_{ik}D_{kj}u_{s_{0}}}{\pare{1+|Du_{s_{0}}|^2}^{\frac{3}{2}}}+3\dfrac{D_iu_{s_0}D_ku_{s_0}D_{kj}u_{s_{0}}}{(1+|Du_{s_0}|^2)^{\frac{5}{2}}}}+\dfrac{1}{(1+|Du_{s_0}|^2)^{\frac{3}{2}}}\right]Du_{s_0},Dv}
	\\
	&-\dot{\gamma}^{\:ij}(s_0)D_{kj}u_{s_{0}}\dfrac{D_iu_{s_0}D_kv+D_ku_{s_0}D_iv}{(1+|Du_{s_0}|)^{\frac32}}.
\end{align*}
Then, since $\gamma$ is a locally  uniformly elliptic operator when the principal curvatures of the graph of $u_{s_0}$ belong to $\Gamma_{+}$, it follows that the second order term in the above equation 
\begin{align*}
	\dot{\gamma}^{ij}(h^i_j(s_0))\left(\dfrac{\delta_{ik}}{\sqrt{1+|Du_{s_0}|^2}}-\dfrac{D_iu_{s_{0}}D_ku_{s_{0}}}{(1+|Du_{s_{0}}|^2)^{\frac32}}\right)D_{kj}v
\end{align*}
is uniformly elliptic in $B_r^n(0)$. 
\newline
Consequently, the hypotheses of the strong maximum principle (Theorem 3.5 in \cite{gilbra_trudinger}) are satisfied, which means that $v$ is constant in $B_r^n(0)$, giving a contradiction with our assumption.

On the other hand, let us assume that the maximum of $v$ is reached at some $x_0\in\partial B_r^n(0)$. Then, since $v$ cannot reach an interior maximum, it follows that $v(x_0)>0$.
\newline
In particular, the hypotheses of Hopf's Lemma (Lemma 3.4 in \cite{gilbra_trudinger}) hold, which implies that $\dfrac{\partial v}{\partial N}(x_0)>0$ where $N=\dfrac{x}{r}$ is the outward unit normal of $\partial B_r^n(0)$. This means that $\alpha(t)=v\left(\dfrac{x_0}{r}t\right)$ is an increasing function when $t$ is close to $r$. 

However, by continuity together with $v\leq 0$ in $B^n_r(0)$, we can find a $t_0$ close to $r$ such that $\alpha(t_0)=0$. This contradicts the fact that $v$ only vanishes at $x=0$.  This finishes the proof of the claim $v=0$ in $B_r^n(0)$. Therefore, $\Sigma_1\cap B^{n+1}(p,r)=\Sigma_2\cap B^{n+1}(p,r)$.

 Finally, since both hypersurfaces are connected, it can be showen by a similar strategy together with the weak uniqueness continuation principle for linear elliptic equations that $\Sigma_1=\Sigma_2$. 
\newline

For the boundary tangency principle, we only need to change $B(p,r)$ with a hemisphere $B(p,r)\cap \set{x_{n+1}\geq 0}$, here we consider $\Pi=\set{x_{n+1}=0}$. 
\newline
Then, since the intersection $\Sigma_i\cap\Pi$ is transversal, the function $v=u_2-u_1$ satisfies $\dfrac{\partial v}{\partial N}(p)=0$ where $N=e_{n+1}$ is the normal unit vector of $\partial \Sigma_i$ at $p$. 
\newline
On the other hand, the hypothesis of the Hopf's Lemma (Lemma 3.4 in \cite{gilbra_trudinger}) holds, $\dfrac{\partial v}{\partial N}(p)>0$ which is a contradiction. Therefore, the same argument holds  in that case to conclude the result.    
\end{proof}

\begin{remark}
	We note that generalizing Theorem \ref{T2} to other type $\gamma$-translators is a subtle issue. One way to do it without much modification on this proof, would be by assuming that $\Gamma$ is a convex cone. In fact, under this assumption, we could take the convex combination of principal curvatures of each hypersurface instead of the local graphs that represents these hypersurfaces as in the proof of Theorem \ref{T2}.
\end{remark}

We end this section with the proof of Corollary \ref{C2}
\begin{proof}[Proof of Corollary \ref{C2}]
		We prove this by contradiction. Assume that there is an entire $\gamma$-translator $\Sigma$ given by a function $u:\rr^n\to\rr$ such that its principal curvatures belong to the cone $\set{\lambda\in\rr^n:\lambda_i\geq0}$. This last property is equivalent to $\Sigma$ being convex.
	
	Let $C_n$ be the ``bowl''-type $\gamma$-translator in $\rr^{n+1}$. Recall that $C_n$ is the unique\footnote{Unique among rotationally symmetric strictly convex graphs.} strictly convex rotationally symmetric graph defined in $B(0,r_0)$ which is $\mathcal{C}^2$-asymp-totic to the cylinder $\Sp^{n-1}(r_0)\times\rr$, where $r_0=\sqrt[\alpha]{\gamma(1,\ldots,1)}$.
	
	Then, translating suitably $C_{n}$ over $\Sigma$, we can find a $t_0>0$ such that $C_{n}+te_{n+1}$ lies strictly above from $\Sigma$ for $t\geq t_0$. Note that this can be done since $C_{n}$ is not an entire graph. Now, we may translate $C_{n}+t\eps_{n+1}$ downward  until it touches $\Sigma$ for the first time.
	\newline
	Finally, by the interior tangential principle Theorem \ref{T2}, we obtain that $\Sigma=C_{n}$ which contradicts that $\Sigma$ is entire.
\end{proof}

\section{Proof of Theorem \ref{T3}}\label{Sec: T3}
Firstly, since the shape operator of a given hypersurface in $\rr^{n+1}$ is invariant under isometries of $\rr^{n+1}$, it follows that $\gamma(\tilde{\lambda})=\gamma(\lambda)$ where $\tilde{\Sigma}$ is the image of any isometry applied to a hypersurface $\Sigma\subset\rr^{n+1}$. 
\newline
In particular, if $R$ is a rotational field which fixes the $x_{n+1}$-axis in $\rr^{n+1}$, then 
\begin{align*}
	\pI{\tilde{\nu},e_{n+1}}=\pI{\nu,R^{-1}e_{n+1}}=\pI{\nu,e_{n+1}}=\gamma(\lambda)=\gamma(\tilde{\lambda}).
\end{align*}
This means that $\gamma$-translators remain as $\gamma$-translators after applying rotational fields which fix the direction of translation under the $\gamma$-flow.

Consequently, under the hypothesis of Theorem \ref{T3}, it is enough to show that $\Sigma$ is symmetric along the plane $\set{x_1=0}$ to obtain that $\Sigma$ is rotationally symmetric.  
\newline

For this purpose we will adopt the following notations and definition used in \cite{Paco_2014} when $\gamma=H$. Let $A\subset\rr^{n+1}$ and $t\in\rr$, then we set: 
\begin{itemize}
	\item  The $1$-parameter family of vertical hyperplanes $\Pi_t=\set{x\in\rr^{n+1}:\textbf{p}(x)=t}$, where $$\textbf{p}(x_1,\ldots,x_{n+1})=x_1.$$ In addition, we denote $\Pi=\Pi_0$.
	\item The $1$-parameter family of horizontal spaces $Z_t=\set{x_{n+1}>t}$.
	
	\item The $1$-parameter family of  subsets of $A$ given by 
	\begin{align*}
		A_+(t)&=\set{x\in A: \textbf{p}(x)\geq t},
		\\
		A_{-}(t)&=\set{x\in A: \textbf{p}(x)\leq t},
		\\
		\delta_t(A)&=A\cap\Pi_t.
	\end{align*}
	Note that $A_+(t)$ and $A_-(t)$ are the right hand side and the left hand side, respectively, along $\Pi_t$ of $A$ (see Definition \ref{Def Order} given below).   
	\item The $1$-parameter families of right and left reflections of $A$, respectively, along the hyperplane $\Pi_t$ given by
	\begin{align*}
		A_{+}^{*}(t)&=\set{(2t-x_1,x_2,\ldots,x_{n+1})\in\rr^{n+1}:(x_1,\ldots,x_{n+1})\in A_+(t)},
		\\
		A_{-}^{*}(t)&=\set{(2t-x_1,x_2,\ldots,x_{n+1})\in\rr^{n+1}:(x_1,\ldots,x_{n+1})\in A_{-}(t)}, 
	\end{align*}
	respectively.
	\item The orthogonal projection $\pi:\rr^{n+1}\to\rr^{n+1}$ to the hyperplane $\Pi$ given by $$\pi(x_1,\ldots,x_{n+1})=(0,x_2,\ldots,x_{n+1}).$$
\end{itemize}

\begin{definition}\label{Def Order}
	Let $A,B$ be two subsets of $\rr^{n+1}$. We say that $B\leq A$ and it is read as ``$A$ is on the right side of $B$'' if, and only if, for every $x\in \Pi$ such that
	\begin{align*}
		\pi^{-1}(\set{x})\cap A\neq \emptyset \mbox{ and }	\pi^{-1}(\set{x})\cap B \neq\emptyset,
	\end{align*}
	we have that 
	\begin{align}\label{def order}	
		\sup\set{\mathbf{p}(p): p\in \pi^{-1}(\set{x})\cap B}\leq \inf\set{\mathbf{p}(p): p\in\pi^{-1}(\set{x})\cap A}.
	\end{align}
	
\end{definition}
Note that for arbitrary sets, the relation $B\leq A$ is not a partial order, but for sets given by the graph of an entire function over the plane $\Pi$, it works as a partial order. 

\begin{remark}
	The method of moving planes requires specifically two things to be applied: 
	\begin{itemize}
		\item The first one is that vertical hyperplanes are to be solutions to Equation \eqref{gamma-trans}. To accomplish this, we add the Property \ref{d} on $\gamma$.
		\item The second one is the necessity of tangential interior and boundary principles, respectively. We will use it to decide what happens when the hypersurface intersects tangentially the reflection of the same hypersurface along a moving plane. In our case, Theorem \ref{T2} gives us these results. 
	\end{itemize}
\end{remark}

\begin{proof}[Proof of Theorem \ref{T3}.]  Recall the hypotheses of this theorem are: $\Sigma\subset\rr^{n+1}$ is  a complete $\gamma$-translator such that 
	\begin{enumerate}
		\item $\gamma:\Gamma\to [0,\infty)$ satisfies properties \ref{a)}-\ref{d}.
		\item $\Sigma$ is strictly convex graph over a ball $B_r^n(0)\subset\rr^{n}$.
		\item$\Sigma$ has a single end $\mathcal{C}^2$-asymptotic to the cylinder $\Sp^{n-1}(r)\times\rr$.
	\end{enumerate}
	Firstly, since $\Sigma$ is a strictly convex vertical graph defined in $B_r(0)$, we have $$r=\sup\set{t>0:\Sigma\cap\Pi(t)\neq\emptyset}.$$
	
	Then, we consider the set 
	\begin{align*}
		\mathcal{A}:=\set{t\in [0,r):\Sigma_+(t) \mbox{ is a graph over }\Pi \mbox{ and } \Sigma_-(t)\leq \Sigma_+^*(t)}.
	\end{align*}
	\begin{figure} 
		\centering
		\includegraphics[width=0.33\columnwidth]{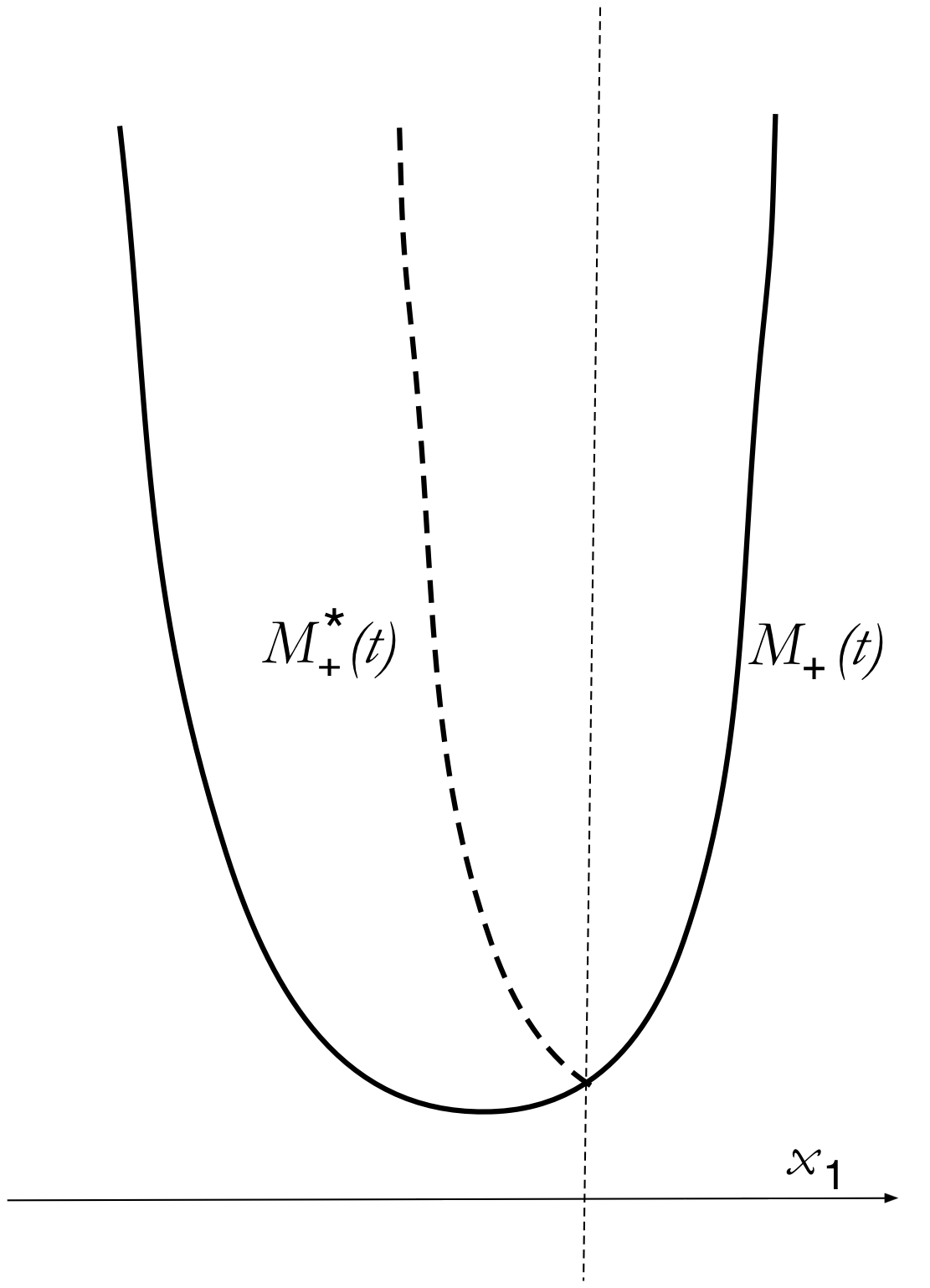}
		\vskip-1mm
		\caption{The moving plane method acting in a rotationally symmetric strictly convex surface $M$. Image courtesy  of Francisco Martín.}
		\label{mpm}
	\end{figure} 
	The set $\mathcal{A}$ will act as the parameter of the family of moving hyperplanes in the application of the moving plane method (see Fig. \ref{mpm}). For this reason, we want to show that $\mathcal{A}$ is the interval $[0,r)$. 
	In fact, this will give us that $\Sigma_-(0)\leq \Sigma_+^*(0)$, and by analogous arguments we will obtain that $ \Sigma_{-}^*(0)\leq \Sigma_+(0)$. We note that the combination of these two properties imply $\Sigma$ is symmetric about the hyperplane $\Pi$. 
	
	Therefore, in the following claims we will show that $\mathcal{A}=[0,r)$  by arguing that $\mathcal{A}$ is not empty, and it is an open and closed set of $[0,r)$. 
	
	\begin{claim}\label{Claim4.3.1}
		The set $\mathcal{A}\neq\emptyset$. In addition, for every $s\in \mathcal{A}$, $[s,r)\subset\mathcal{A}$. 
	\end{claim}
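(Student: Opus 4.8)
The plan is to establish Claim \ref{Claim4.3.1} by exploiting the asymptotic geometry of $\Sigma$ near its single end. First I would show that for $t$ sufficiently close to $r$, the portion $\Sigma_+(t)$ is a graph over $\Pi$: since $\Sigma$ is a strictly convex graph over $B_r^n(0)$ that is $\mathcal{C}^2$-asymptotic to the cylinder $\Sp^{n-1}(r)\times\rr$, the outward conormal along $\delta_t(\Sigma)=\Sigma\cap\Pi_t$ becomes arbitrarily close to horizontal and points in the $+x_1$ direction as $t\uparrow r$; in particular the tangent planes to $\Sigma$ at points with $\mathbf{p}(x)=t$ are transversal to $\Pi_t$, so $\Sigma_+(t)$ projects injectively onto $\Pi$ under $\pi$. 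Concretely, convexity of the defining function $u$ on $B_r^n(0)$ together with $\lambda_1\to0$, $\lambda_i\to 1/r$ forces $|Du|\to\infty$ as one approaches $\partial B_r^n(0)$ in the "radial" directions while the level sets $\{u=\text{const}\}$ near the boundary are close to $\partial B_r^n(0)$; this gives the graph property for $t$ near $r$.

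Next I would verify the order relation $\Sigma_-(t)\le\Sigma_+^*(t)$ for such $t$. By convexity of $\Sigma$ (equivalently of $u$), for each fixed $(x_2,\dots,x_{n+1})$ the slice of $\Sigma$ in the $x_1$-direction is, locally near the end, controlled so that reflecting the right piece $\Sigma_+(t)$ across $\Pi_t$ lands to the right of the left piece $\Sigma_-(t)$: this is the standard consequence that a convex graph, cut by a plane close enough to the "widest" supporting plane $\Pi_r$, has its smaller cap reflecting inside the larger piece. I would phrase this using Definition \ref{Def Order}: for $x\in\Pi$ with $\pi^{-1}(x)$ meeting both $\Sigma_-(t)$ and $\Sigma_+^*(t)$, the required inequality \eqref{def order} follows because the $x_1$-coordinate along $\Sigma$ over a fixed projected point is monotone in the appropriate sense near the end, using strict convexity and the asymptotic-to-cylinder condition to rule out the slice bending back. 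This shows $\mathcal{A}\neq\emptyset$ since it contains an interval $(r-\eps,r)$.

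For the second assertion, that $s\in\mathcal{A}$ implies $[s,r)\subset\mathcal{A}$, I would argue monotonically: if $\Sigma_+(s)$ is a graph over $\Pi$ then so is $\Sigma_+(t)$ for any $t\ge s$, because $\Sigma_+(t)\subset\Sigma_+(s)$ and the graph property is inherited by subsets whose $\pi$-images shrink accordingly — one must check that $\pi$ restricted to $\Sigma_+(t)$ is still injective, which is immediate as it is a restriction of the injective map $\pi|_{\Sigma_+(s)}$. For the order relation, I would use that $\Sigma_-(t)\supset\Sigma_-(s)$ is "wider" but the extra piece $\Sigma_-(t)\setminus\Sigma_-(s)=\{s\le\mathbf{p}\le t\}\cap\Sigma$ reflects across $\Pi_t$ to the left of $\Sigma_-(t)$ by convexity, while the already-controlled part reflects correctly by the hypothesis at $s$ combined with the fact that $\Pi_t$ is further right than $\Pi_s$; a short convexity computation comparing the reflections across the two parallel planes closes this.

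The main obstacle I expect is the bookkeeping in the order relation: making precise, from strict convexity plus the $\mathcal{C}^2$-asymptotic-to-cylinder hypothesis, that the reflected cap genuinely lies to the correct side \emph{uniformly} over all projected base points, including those near the "equator" $\delta_t(\Sigma)$ where the tangent planes are nearly vertical and the graph representation degenerates. I would handle this by working with the convex body bounded by $\Sigma$ rather than with $u$ directly, so that $\Sigma_-(t)\le\Sigma_+^*(t)$ becomes the elementary statement that reflecting a hyperplane section of a convex set across that hyperplane keeps the smaller cap inside the set — the asymptotic-to-cylinder condition is only needed to guarantee the graph property (transversality) for $t$ near $r$, not the ordering.
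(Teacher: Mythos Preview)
Your outline diverges from the paper chiefly in how the ordering $\Sigma_-(t)\le\Sigma_+^*(t)$ is obtained for $t$ near $r$. The paper does \emph{not} argue this from convexity. Instead (Step~3 of its proof) it argues by contradiction via the tangential principle: assuming the ordering fails at some $t_1$ close to $r$, one locates a first-order contact between $\Sigma_+^*(t_0)$ and $\Sigma_-(t_0)$ for some $t_0>0$, and Theorem~\ref{T2} then forces $\Sigma$ to be symmetric about $\Pi_{t_0}$; but the single end is $\mathcal{C}^2$-asymptotic to the cylinder centered on the $x_{n+1}$-axis, hence symmetric only about $\Pi=\Pi_0$, giving $t_0=0$. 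The same tangential-principle device is reused for the assertion $[s,r)\subset\mathcal{A}$.

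Your convexity route for the ordering has a genuine gap. The ``elementary statement that reflecting a hyperplane section of a convex set across that hyperplane keeps the smaller cap inside the set'' is false: take the triangle in $\rr^2$ with vertices $(0,0)$, $(1,0)$, $(1,1)$ and cut by $\{x_1=t\}$ for any $t\in(0,1)$; the vertex $(1,1)$ lies in the small cap and reflects to $(2t-1,1)$, which is outside the triangle because at $x_1=2t-1$ the triangle reaches only up to height $2t-1<1$. So ``smaller cap reflects inside'' is not a consequence of convexity alone, and your final paragraph's claim that ``the asymptotic-to-cylinder condition is only needed to guarantee the graph property \ldots, not the ordering'' is precisely where the argument breaks. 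In the paper's proof the centering of the asymptotic cylinder enters exactly here, through the contradiction with symmetry of the end about $\Pi_0$; you would need either to reinstate that use of Theorem~\ref{T2}, or to give a genuine uniform estimate showing $\ell(c,x')+\rho(c,x')\le 2t$ for all relevant $(x',c)$ once $t$ is close to $r$ (this can be done, but it is not the one-line convexity fact you invoke, and it does use that the domain $B_r^n(0)$ is centered).

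By contrast, your plan for the second assertion $[s,r)\subset\mathcal{A}$ is sound and in fact more direct than the paper's. Once $\Sigma_+(s)$ is a graph over $\Pi$, any line $\pi^{-1}(x)$ meeting $\Sigma_+(t)\subset\Sigma_+(s)$ at some $\rho>t>s$ can meet $\Sigma$ in at most one further point $\ell$, necessarily with $\ell<s$ (injectivity of $\pi|_{\Sigma_+(s)}$); the hypothesis $s\in\mathcal{A}$ then gives $\ell\le 2s-\rho<2t-\rho$, which is exactly $\Sigma_-(t)\le\Sigma_+^*(t)$. Your written explanation of this step (reflecting the ``extra piece'' $\Sigma_-(t)\setminus\Sigma_-(s)$) is garbled --- it is $\Sigma_+(t)$ that gets reflected --- but the monotonicity idea underneath it is correct and avoids the appeal to Theorem~\ref{T2} that the paper makes at this point.
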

	\begin{proof}
		We will show  in the following steps that there exists $\eps\in (0,t_0)$, such that $(t_0-\eps,t_0)\subset\mathcal{A}$, giving that $\mathcal{A}\neq\emptyset$.
		
		\begin{step}
			$\Sigma_+(t)$ is connected for every $t\in [0,r)$.
		\end{step}
		\begin{proof}
			Firstly, lets recall that $\Sigma$ has a single end. This implies that $\Sigma_+(t)$ has only one unbounded component for every $t\in [0,r)$. Because otherwise, we can choose a compact $\Sigma'$ component of $\Sigma_+(t)$. Then, we can translate a hyperplane $\Pi_t$ until it touches $\Sigma'$ at a first order contact point\footnote{This means that both hypersurfaces intersect at a point where the first derivatives coincide. In particular,
				the tangent planes coincides because the unit normal vectors of each hypersurface concide at this point.}. Note that the tangential principles Theorem \ref{T2}, applied to $\Sigma'$ with $\Pi_{t_1}$, for some $t_1>0$, imply that $\Sigma'$ is totally geodesic, which  contradicts that $\Sigma$ is strictly convex graph. 
			\newline
			Therefore, $\Sigma_+(t)$ is connected for every $t\in [0,t_0)$.
		\end{proof}
		
		\begin{step}
			There exists $\eps_0\in (0,t_0)$ such that $\Sigma_+(t)$ is a graph over $\Pi$ for $t\in(t_0-\eps_0,t_0)$.  
		\end{step} 
		\begin{proof}
			Firstly, we will show that there exists $\eps_0\in (0,r)$ such that 
			\begin{align*}
				\pI{\nu, e_1}=\dfrac{D_1u}{\sqrt{1+|Du|^2}}>0
			\end{align*}
			holds in $\Sigma_+(t)$ for all $t\in(r-\eps_0,r)$. Recall that $\Sigma$ is a vertical graph of a function $u:B_r^n(0)\subset\rr^n\to\rr$.
			\newline
			For this purpose let us assume the contrary. This means that there exist an increasing sequence $t_l$ converging to $t_0$ such that $D_1u(x_1^l,\ldots,x_n^l)\leq0$ for some  $p_l=(x_1^l,\ldots,x_n^l,u(x_1^l,\ldots,x_n^l))\in\Sigma_+(t_l)$. 
			\newline
			Then, since $t_l\leq x_1^l< t_0$, it follows that $\abs{p_l}\to\infty$ . Recall that  the end of $\Sigma$ is $\mathcal{C}^2$-asymptotic to the cylinder $\Sp^n(r)\times\rr$, this means that $\nu(p_l)\to (1,0\ldots,0)$ as $l\to \infty$. In fact, this occurs because the intersection of the cylinder $\Sp^n(r)$ with the hyperplane $\Pi_r$ occurs at the point with unit normal vector $(1,0,\ldots,0)$. Note that this contradicts that $D_1u(x_1^l,\ldots,x_n^l)=0$ (see Fig \ref{Fig1}).
			\begin{figure} 
				\centering
				\includegraphics[width=0.33\columnwidth]{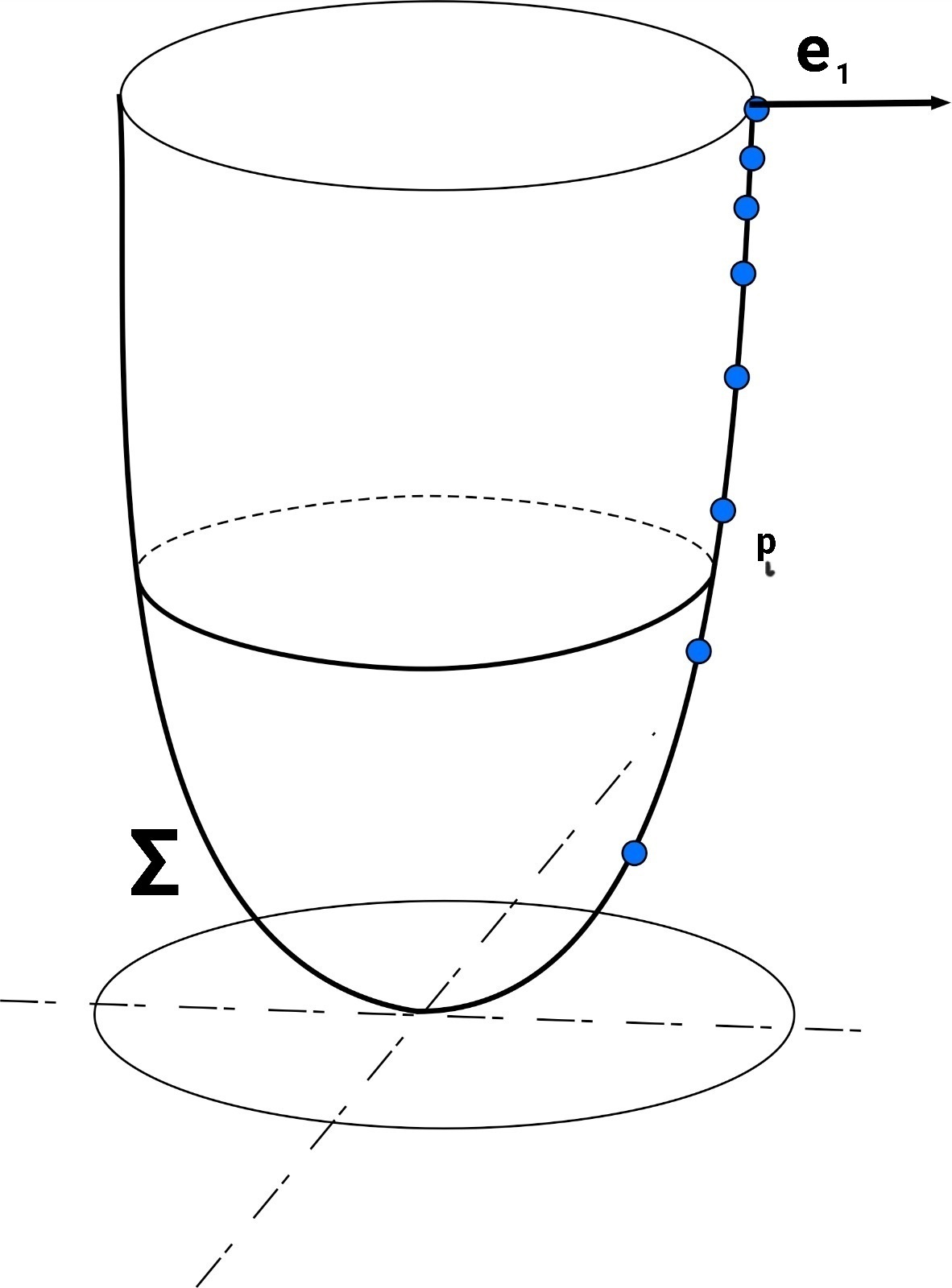}
				\vskip-1mm
				\caption{Sequence $\nu(p_l)\to e_1$  as $l\to\infty$ where $p_l\in\Sigma_+(t_l)$.  Image courtesy  of Francisco Martín.}
				\label{Fig1}
			\end{figure} 
			
			Consequently, there exists $\eps_0\in (0,r)$ such that $\pI{\nu,e_1}>0$ holds at $\Sigma_+(t)$ for all $t\in(r-\eps,r)$. In particular, this fact together with $\Sigma$ being a complete graph defined in $B^n_r(0)$\footnote{This means that the Gauss map $\nu:\Sigma\to\Sp^{n}$ is a proper local diffeomorphism which maps the boundary to the boundary of the image.} and $\Sigma_+(t)$ being connected, imply that $\Sigma_+(t)$ is  a graph over $\Pi$ for all $t\in(r-\eps_0,r]$. 
		\end{proof}
		
		\begin{step}
			Let $\eps=\dfrac{\eps_0}{2}$, then $\Sigma_-(t)\leq\Sigma_+^*(t)$ for every $t\in [r-\eps,r)$. Finalizing  the first part of the proof of Claim\ref{Claim4.3.1}. 
		\end{step}
		\begin{proof}
			Firstly, since $\Sigma_+(t)$ is a graph over $\Pi$ for $t\in[r-\eps,r)$, the relation ``being at the right side'' (see Definition \ref{Def Order}) is a partial order. 
			\newline
			Consequently, it suffices to show that for $t_1=r-\dfrac{\eps_0}{2}$, the condition $\Sigma_-(t_1)\leq\Sigma_+^*(t_1)$ (i.e: the reflection of $\Sigma_+(t_1)$ along $\Pi_{t_1}$ is at the right hand side of $\Sigma_-(t_1)$) is satisfied. 
			\newline
			Let us assume that $\Sigma_-(t_1)\not\leq\Sigma_+^*(t_1)$, then we may find a first order contact point $p_0$ between $\Sigma_-(t_0)$ and $\Sigma_+^*(t_0)$ for some $0<t_0<r-\eps_0$. In fact, $t_0\not\in [r-\eps_0,t_1]$ since $\Sigma_{+}(t)$ is a graph over $\Pi$ for $t\in [r-\eps,r)$, and this would imply that $\Sigma_+(t_0)$ is not a graph over $\Pi$. Moreover, if $t_0=0$, then the proof finishes, since $0\in \mathcal{A}$ is what we want to prove. 
			\newline
			Therefore, the interior and boundary tangential principles in Theorem \ref{T2} imply that $\Sigma_+^*(t_0)=\Sigma_-(t_0)$, which means that $\Sigma$ is symmetric along $\Pi_{t_0}$. But, since the end of $\Sigma$ is symmetric along the plane $\Pi$, it follows that $t_0=0$ a contradiction.
		\end{proof}
		
		\begin{step}
			For every $s\in\mathcal{A}$, $[s,r)\subset\mathcal{A}$. 
		\end{step} 
		\begin{proof}
			Firstly, we note that if $s\in\mathcal{A}$, then $\Sigma_+(s)$ is a graph over $\Pi$. In particular, since $\Sigma_+(t)\subset \Sigma_+(s)$ for every $s<t<r$, it follows that $\Sigma_+(t)$ is a graph over $\Pi$.
			
			Now, let $t_1\in (s,r)$ be such that $\Sigma_-(t_1)\not\leq \Sigma_+^*(t_1)$. Then, we can find $t_0\in (t_1,r-\eps)$ such that $\Sigma_+^*(t_0)\setminus\delta_{t_0}(\Sigma)$ and $\Sigma_-(t_0)\setminus\delta_{t_0}(\Sigma)$. We note that the interior tangential principle implies that  $\Sigma_+^*(t_0)=\Sigma_-(t_0)$, giving that $\Pi_{t_0}\neq\Pi$ is hyperplane of symmetry of $\Sigma$, this contradics that the end of sigma is symmetric along $\Pi$.   
		\end{proof}
	\end{proof}
	
	\begin{claim}
		The set $\mathcal{A}$ is closed in $[0, t_0)$.
	\end{claim}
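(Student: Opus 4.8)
The plan is to show that $\mathcal{A}$ equals $[a,r)$, where $a:=\inf\mathcal{A}$. By Claim \ref{Claim4.3.1} we already know $\mathcal{A}\neq\emptyset$ and that $[s,r)\subset\mathcal{A}$ whenever $s\in\mathcal{A}$, so $\mathcal{A}$ is an interval with right endpoint $r$ and $a\in[0,r)$; hence proving that $\mathcal{A}$ is closed in $[0,r)$ is the same as proving $a\in\mathcal{A}$. So I would fix a strictly decreasing sequence $t_k\in\mathcal{A}$ with $t_k\downarrow a$ and check the two conditions defining $\mathcal{A}$ at the level $a$, namely that $\Sigma_+(a)$ is a graph over $\Pi$ and that $\Sigma_-(a)\leq\Sigma_+^*(a)$.

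For the first condition I would use the argument already carried out in the proof of Claim \ref{Claim4.3.1}: since $\Sigma$ is a complete graph over $B_r^n(0)$ and $\Sigma_+(a)$ is connected (shown there to hold at every level in $[0,r)$), it is enough to check that $\pI{\nu,e_1}\geq0$ on $\Sigma_+(a)$. Each $t_k$ lies in $\mathcal{A}$, so $\pI{\nu,e_1}\geq0$ on $\Sigma_+(t_k)$; since $\bigcup_k\Sigma_+(t_k)=\set{p\in\Sigma:\mathbf{p}(p)>a}$ is dense in $\Sigma_+(a)$ (here one uses that $\Pi_a$ is transverse to $\Sigma$ for $a<r$) and $\nu$ is continuous, the inequality $\pI{\nu,e_1}\geq0$ persists on all of $\Sigma_+(a)$, which gives that $\Sigma_+(a)$ is a graph over $\Pi$.

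For the second condition I would pass to the limit in the order relation. Writing $\Sigma$ as the graph of a smooth $u$ over $B_r^n(0)$, as $t_k\downarrow a$ the hyperplanes $\Pi_{t_k}$ converge to $\Pi_a$ and $\Sigma_+(t_k)$ increases to $\set{p\in\Sigma:\mathbf{p}(p)>a}$, so the reflected sheets $\Sigma_+^*(t_k)$ converge locally uniformly, as graphs over $\Pi$, to $\Sigma_+^*(a)$; also $\Sigma_-(a)\subset\Sigma_-(t_k)$. Then for every $x\in\Pi$ whose fibers $\pi^{-1}(\set{x})\cap\Sigma_-(a)$ and $\pi^{-1}(\set{x})\cap\Sigma_+^*(a)$ are both nonempty, the corresponding fibers over $t_k$ are nonempty for large $k$, and by the $\mathcal{C}^0$ convergence the suprema of $\mathbf{p}$ on the left fibers and the infima of $\mathbf{p}$ on the right fibers converge to those at level $a$. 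Since $\Sigma_-(t_k)\leq\Sigma_+^*(t_k)$ holds for every $k$, inequality \eqref{def order} is preserved in the limit, so $\Sigma_-(a)\leq\Sigma_+^*(a)$. Hence $a\in\mathcal{A}$, and $\mathcal{A}=[a,r)$ is closed in $[0,r)$.

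I expect the delicate point to be the first condition rather than the second: one has to make sure the closed condition $\pI{\nu,e_1}\geq0$ does not degenerate in the limit to something that fails to be a graph over $\Pi$. This is where strict convexity of $\Sigma$ enters — since each slice $t\mapsto u(t,\bar x)$ is strictly convex, $\pI{\nu,e_1}\geq0$ at level $a$ already forces $\pI{\nu,e_1}>0$ in the interior of $\Sigma_+(a)$, so the vertical projection $\pi$ stays injective there. The remaining care is purely bookkeeping: tracking the reflected sheets $\Sigma_+^*(t_k)$ and checking that the relevant fibers stay nonempty when passing to the limit; the order-relation step is then a routine $\mathcal{C}^0$-continuity argument.
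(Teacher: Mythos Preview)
Your proof is correct, and it takes a different route from the paper's. The paper argues both defining conditions of $\mathcal{A}$ at $s_0$ by contradiction via explicit constructions: for the graph condition it finds two points $p,q\in\Sigma_+(s_0)$ on the same horizontal line and manufactures an $s>s_0$ in $\mathcal{A}$ at which the order relation visibly fails; for the order condition it posits a contact point $p_0\notin\delta_{s_0}(\Sigma)$ and builds explicit sequences $p_k,q_k$ contradicting $s_k\in\mathcal{A}$. Your argument is instead a direct limit argument: you observe that ``$\Sigma_+(t)$ is a graph over $\Pi$'' is, under strict convexity of $u$, equivalent to $D_1u\ge 0$ on $\{x_1\ge t\}$, which is manifestly closed under $t_k\downarrow a$; and you pass the inequality $\Sigma_-(t_k)\le\Sigma_+^*(t_k)$ to the limit fibrewise using that $\Sigma_+^*(t_k)\to\Sigma_+^*(a)$ in $\mathcal{C}^0$ and $\Sigma_-(a)\subset\Sigma_-(t_k)$. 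One small point worth making explicit: strict convexity is needed in \emph{both} directions of the equivalence ``graph over $\Pi$ $\Leftrightarrow$ $\langle\nu,e_1\rangle\ge 0$'', not only to upgrade $\ge 0$ to $>0$ in the interior at level $a$, but already to deduce $\langle\nu,e_1\rangle\ge 0$ on $\Sigma_+(t_k)$ from the graph hypothesis (a general graph over $\Pi$ can have $\langle\nu,e_1\rangle$ of either sign). Also, in the order step the boundary case $q\in\delta_a(\Sigma)$ should be singled out, since then $\pi^{-1}(x)\cap\Sigma_+^*(t_k)$ may be empty; but there the inequality $\mathbf p(p)\le a=\mathbf p(q)$ is automatic. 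With these two remarks your approach is complete and arguably cleaner than the paper's, while the paper's explicit constructions have the advantage of not relying on any implicit equivalence and of pinpointing exactly where a failure at $s_0$ would propagate to nearby levels.
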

	
	\begin{proof}
		Let $s_k\in \mathcal{A}$ be a sequence of points such that $s_k\to s_0$. In the following two steps we are going to show by contradiction that $s_0\in\mathcal{A}$. 
		\begin{step}
			Assume that $\Sigma_+(s_0)$ is not a graph over $\Pi$.
		\end{step}
		\begin{proof}
			The assumption of this step implies that we can find $p,q\in \Sigma_+(s_0)$ such that $p=(p_1,p_2,\ldots,p_{n+1})$ and $q=(q_1,p_2,\ldots,p_{n+1})$ with $p_1<q_1$. 
			
			Then, by Claim \ref{Claim4.3.1}, we have that $s>s_0$ for every $s\in\mathcal{A}$. In particular, it follows that follows that $p_1=s_0$. Because otherwise $q\in\Sigma_+(p)$, and $\Sigma_+(p)$ would be a graph over $\Pi$ which does not agree with our assumption. 
			
			Moreover, we note that 
			\begin{align*}
				q_1>s=\dfrac{q_1+3s_0}{4}>s_0=p_1.
			\end{align*} 
			Then, since $q\in \Sigma_+(s)$, $p\in\Sigma_-(s)$ and
			\begin{align*}
				2s-q_1=\dfrac{3s_0-q_1}{2}=s_0+\dfrac{s_0-q_1}{2}<p_1,
			\end{align*}
			it follows that $\Sigma_-(s)\not\leq\Sigma_+^*(s)$, which contradicts that $s\in\mathcal{A}$. 
		\end{proof}
		
		\begin{step}
			Assume that $\Sigma_+(s_0)$ is a graph over $\Pi$ and $\Sigma_-(s_0)\not\leq\Sigma_+^*(s_0)$.
		\end{step}
		\begin{proof}
			Firstly, we note that the relation $A\leq B$ is partial order for graphs over $\Pi$. Then, if  $\Sigma_-(s_0)\not\leq\Sigma_+^*(s_0)$, by continuity we may find a first order contact point in the intersection of these sets which is not in the hyperplane $\Pi_{s_0}$, i.e.: $$p_0\in (\Sigma_+^*(s_0)\cap\Sigma_-(s_0))-\delta_{s_0}(\Sigma).$$ 
			In particular, $\textbf{p}(p_0)=2s_0-x_1$ for some $(x_1,\ldots,x_{n+1})=q_0\in\Sigma_+(s_0)$.
			
			Moreover, since $\Sigma_+(s_0)$ is a graph over $\Pi$, we may write $$x_1=s_0+f_{s_0}(0,x_2,\ldots,x_{n+1})$$ for some positive continuous function $f_{s_0}:\Pi\to\rr$. The reason why $f_{s_0}$ is positive is because $p_0\not\in\delta_{s_0}(\Sigma)$.
			
			Next, we choose $x_1^k=s_k+f_{s_0}(0,x_2,\ldots,x_{n+1})$. Then, $q_k:=(x_1^k,x_2,\ldots,x_{n+1})\in\Sigma_+(s_k)$ and 
			\begin{align*}
				2s_k-x_1^k=s_k-f_{s_0}(0,x_2,\ldots,x_{n+1}).
			\end{align*}
			This means that $p_k:=(2s_k-x_1^k,x_2,\ldots,x_{n+1})\in \Sigma_+^*(s_k)\cap \Sigma_-(s_k)$. 
			
			Then, since $\Sigma_-(s_k)\leq\Sigma_+^*(s_k)$, it follows that $2s_k-x_1^k=s_k$ (see a proof in \eqref{eq1} below). In particular, $f_{s_0}(0,x_2,\ldots,x_{n+1})=0$, which means that  $\textbf{p}(p_0)=s_0=x_1$, or equivalently, $p_0\in \delta_{s_0}(\Sigma)$ given the desires  contradiction. 
		\end{proof}
		Therefore, we have proven that $\Sigma_+(s_0)$ is a graph over $\Pi$ and $\Sigma_+^*(s_0)$ is at the right hand side of $\Sigma_-(s_0)$, finalizing the proof of this claim.  		
	\end{proof}
	
	\begin{claim}
		The set $\mathcal{A}$ is open in $[0,r)$.
	\end{claim}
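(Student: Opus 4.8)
The plan is to prove the Claim and thereby finish Theorem \ref{T3}: once $\mathcal{A}$ is known to be nonempty (Claim \ref{Claim4.3.1}), closed (the preceding Claim) and open in the connected set $[0,r)$, it equals $[0,r)$, so $0\in\mathcal{A}$ and $\Sigma_-(0)\leq\Sigma_+^*(0)$; the symmetric argument (replacing $\mathbf{p}$ by $-\mathbf{p}$) gives $\Sigma_-^*(0)\leq\Sigma_+(0)$, whence $\Sigma$ is symmetric about $\Pi$, and the rotational invariance of $\gamma$-translators about the $x_{n+1}$-axis recorded at the start of the section upgrades this to rotational symmetry. So it remains to prove openness. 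Fix $s_0\in\mathcal{A}$. If $s_0=0$ there is nothing to do; so assume $s_0>0$, and, since $[s_0,r)\subset\mathcal{A}$ by Claim \ref{Claim4.3.1}, I only need a $\delta>0$ with $[s_0-\delta,s_0)\subset\mathcal{A}$. I argue by contradiction, assuming $t_k\uparrow s_0$ with $t_k\notin\mathcal{A}$.

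First I would record two facts. (i) No $q\in\Sigma$ has $\langle\nu(q),e_1\rangle=\pm 1$: such a $q$ would realize $\sup_\Sigma x_1=r$ or $\inf_\Sigma x_1=-r$, impossible since $\Sigma$ is a graph over the \emph{open} ball $B_r^n(0)$ and its end is asymptotic to the coaxial cylinder $\Sp^{n-1}(r)\times\rr=\set{x_1^2+\cdots+x_n^2=r^2}$. Hence $\Sigma\cap\Pi_t$ is transverse for every $t\in(0,r)$ and $\partial\Sigma_-(t)=\Sigma\cap\Pi_t=\partial\Sigma_+^*(t)$. (ii) That cylinder is symmetric about $\Pi$, and for $t$ in a fixed compact subinterval of $(0,r)$ (which eventually contains all $t_k$) it has, over each vertical line, at most one point with $x_1\geq t$ and a uniformly positive gap between the reflection of that point and its $\set{x_1<t}$ part; since the end of $\Sigma$ is $\mathcal{C}^2$-asymptotic to it, both ``$\Sigma_+(t)$ is a graph over $\Pi$'' and ``$\Sigma_-(t)\leq\Sigma_+^*(t)$'' can fail only on a fixed compact part of $\Sigma$, so any offending sequence of points subconverges in $\Sigma$. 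The common mechanism is then: if $\Sigma_+^*(s_0)$ touches $\Sigma_-(s_0)$ at an interior point, the interior tangential principle of Theorem \ref{T2} forces $\Sigma_-(s_0)=\Sigma_+^*(s_0)$; if it touches at $p_0\in\Pi_{s_0}$ with moreover $e_1\in T_{p_0}\Sigma$, then the boundaries $\Sigma\cap\Pi_{s_0}$ coincide, the surface tangent planes coincide (reflection across $\Pi_{s_0}$ fixes $\nu(p_0)\perp e_1$), and by (i) the intersection is transverse, so the boundary tangential principle of Theorem \ref{T2} again forces $\Sigma_-(s_0)=\Sigma_+^*(s_0)$. In both cases $\Sigma$, hence its asymptotic cylinder, is symmetric about $\Pi_{s_0}$, forcing $s_0=0$, a contradiction.

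The proof then splits into two steps. \textbf{Step 1} ($\Sigma_+(t)$ stays a graph over $\Pi$ near $s_0$): if not, there are $p_k\neq q_k\in\Sigma_+(t_k)$ with $\pi(p_k)=\pi(q_k)$; since $\Sigma_+(s_0)$ is a graph, the smaller of $\mathbf{p}(p_k),\mathbf{p}(q_k)$ lies in $[t_k,s_0)$ and tends to $s_0$, so by (ii) the pair subconverges to a single point $p_0\in\Sigma\cap\Pi_{s_0}$; as $p_k-q_k\parallel e_1$ the secants converge to the tangent line, giving $e_1\in T_{p_0}\Sigma$, and the boundary mechanism yields the contradiction. \textbf{Step 2} ($\Sigma_-(t)\leq\Sigma_+^*(t)$ stays valid near $s_0$): by Step 1, $t_k\notin\mathcal{A}$ now forces $\Sigma_-(t_k)\not\leq\Sigma_+^*(t_k)$, which produces $p_k,\tilde q_k\in\Sigma$ with $\pi(p_k)=\pi(\tilde q_k)$, $\mathbf{p}(p_k)\leq t_k<\mathbf{p}(\tilde q_k)$ and $\mathbf{p}(p_k)+\mathbf{p}(\tilde q_k)>2t_k$ (hence $\set{p_k,\tilde q_k}$ is exactly the intersection of $\Sigma$ with that vertical line, by convexity); passing to limits gives $p_0,\tilde q_0$ with $a:=\mathbf{p}(p_0)\leq s_0\leq\mathbf{p}(\tilde q_0)=:b$ and $a+b\geq 2s_0$, and comparing with $\Sigma_-(s_0)\leq\Sigma_+^*(s_0)$ forces $a+b=2s_0$, so $p_0$ is the reflection of $\tilde q_0$ across $\Pi_{s_0}$ and is a contact point of $\Sigma_-(s_0)$ and $\Sigma_+^*(s_0)$. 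If $a<s_0$ this point is interior (and, by Step 1, $\Sigma_+^*(s_0)$ is locally a graph over $\Pi$ there, so the two surfaces are tangent): interior mechanism. If $a=s_0$ then $b=s_0$, $p_0=\tilde q_0\in\Pi_{s_0}$ and the chord has degenerated, so $e_1\in T_{p_0}\Sigma$: boundary mechanism. Either way we reach the contradiction, so $\mathcal{A}$ is open.

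The hard part will be item (ii) and the case analysis around the contact point: one must use the $\mathcal{C}^2$-asymptotics to the cylinder quantitatively to keep the offending points in a compact region (so that limits exist), and then verify that whenever the limiting contact point lands on $\Pi_{s_0}$ it automatically carries the vertical tangent $e_1\in T_{p_0}\Sigma$ demanded by the boundary tangential principle --- here the convexity of $\Sigma$ and the ``secants converge to the tangent'' observation are what make it work. Once the contact point is produced, deriving the contradiction from the symmetry of a surface asymptotic to a coaxial cylinder about a plane $\Pi_{s_0}\neq\Pi$ is immediate, and is the very mechanism already used in Claim \ref{Claim4.3.1} and in the preceding Claim.
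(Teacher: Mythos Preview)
Your argument is correct and follows the same moving-plane mechanism as the paper: confine any failure of the two defining conditions of $\mathcal{A}$ to a compact slab by exploiting the $\mathcal{C}^2$-asymptotics to the cylinder, then use the interior/boundary tangential principles of Theorem~\ref{T2} to force $\Sigma$ to be symmetric about $\Pi_{s_0}$ with $s_0>0$, contradicting the symmetry of the asymptotic cylinder. The only real difference is organizational: the paper builds the opening $\eps$ directly---first handling the end $Z_h=\{x_{n+1}>h\}$ via the asymptotics and then the compact piece $K=\Sigma\cap\{x_{n+1}\le h\}$ by continuity of $D_1u>0$---whereas you run a contradiction with sequences $t_k\uparrow s_0$ and extract the vertical tangent $e_1\in T_{p_0}\Sigma$ from the secant limit $p_k-q_k\parallel e_1$; both routes land on the same two contact scenarios and the same contradiction $s_0=0$. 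Your packaging via the ``common mechanism'' (interior vs.\ boundary contact) is a clean abstraction of exactly what the paper invokes at the corresponding places.
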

	
	\begin{proof}
		Let $t_1\in\mathcal{A}$, we want to show that there exists $\eps>0$ such that  $(t_1-\eps,t_1]\subset\mathcal{A}$. Then, by the Claim \ref{Claim4.3.1}, the set $(t_1-\eps,t_1+\eps)$ will be an open neighborhood of $t_1$ contained in $\mathcal{A}$, or equivalently, $\mathcal{A}$ is an open subset of $[0,r)$. 
		
		We divide this proof in the following two steps. 
		\begin{step}\label{step1}
			There exist $\eps_2\in (0,t_1)$ such that $\Sigma_+(t_1-\eps_2)$ is a graph over $\Pi$.
		\end{step}
		\begin{proof}
			Since $t_1\in \mathcal{A}$, it follows that $\Sigma_+(t_1)$ is a graph over $\Pi$. Let $h\gg 1$ such that for every $p\in\Sigma_+(t_1)\cap Z_h$ the unit normal vector at $p\in \Sigma_+(t_1)\cap Z_h $ satisfies $|\nu(p)-\nu_{\Sp^{n-1}\times\rr}(p)|<\eps$. Recall that $Z_h=\set{x_{n+1}>h}$. In particular t
			\newline
			Then, if there is $p\in\Sigma_+(t_1)\cap Z_h$ such that $\nu(p)\in\Pi$ with  $\textbf{p}(p)=\tilde{s}\in [t_1, r)$, we would have by the boundary tangential principle that $\Sigma_+^*(\tilde{s})=\Sigma_-(\tilde{s})$ (see Fig \ref{Fig2}.). This means that $\Sigma$ is symmetric along  $\Pi_{\tilde{s}}\neq \Pi$, and also the end of $\Sigma$, a contradiction.
			\begin{figure} 
				\centering
				\includegraphics[width=0.33\columnwidth]{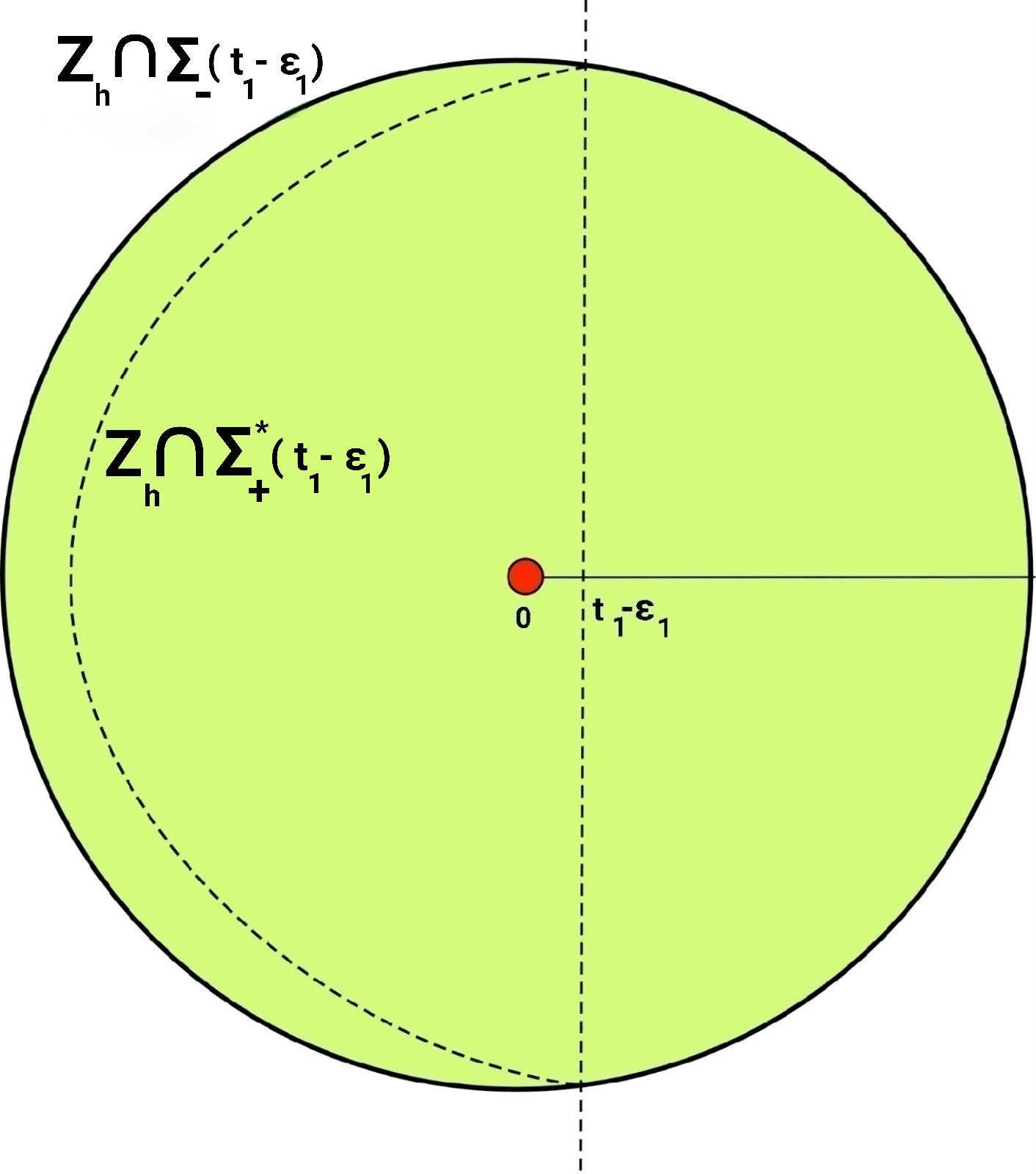}
				\vskip-1mm
				\caption{View from $x_{n+1}=h\gg1$ with $\Sigma_+^*(t_1-\eps_1)$ is drawn in dashed lines. Image courtesy  of Francisco Martín.}
				\label{Fig2}
			\end{figure}
			\newline
			In particular, we have 
			\begin{align*}
				\nu(\Sigma_+(t_1))\cap\Pi=\emptyset.
			\end{align*}
			Consequently, there exists an $\eps_0\in (0,t_1]$ such that 
			\begin{align*}
				\nu(\Sigma_+(t))\cap\Pi=\emptyset,\mbox{ holds for all }t\in (t_1-\eps_0,t_1). 
			\end{align*}
			Then, by combining this with the fact that $\Sigma$ is embedded and $\Sigma_+(t)$ is connected for all $t\in[0,t_0)$, we obtain that $\Sigma_+(t)\cap Z_h$ can be written as a graph over its projection in $\Pi$ for all $t\in (t_1-\eps_0,t_1)$.
			\newline
			Furthermore, by taking  $\eps_1=\dfrac{\eps_0}{2}$ as we did in Claim \ref{Claim4.3.1}, we will get that
			\begin{align}\label{eq3}
				\Sigma_-(t_1-\eps_1)\cap Z_h\leq \Sigma_+^*(t_1-\eps_1)\cap Z_h.
			\end{align} 
			
			Next, we consider the compact set
			\begin{align*}
				K:=\Sigma\cap\set{x_{n+1}\leq h}.
			\end{align*}
			We note that 
			\begin{align*}
				\pI{\nu,e_1}=\dfrac{D_1u}{\sqrt{1+|Du|^2}}>0
			\end{align*}
			holds in $K_+(t_1)$ since it is a graph over $\Pi$. 
			\newline
			Then, by compactness there exist a $\eps_2\in(0,\eps_1)$ such that $D_1u>0$ holds in $K_+(t)$ for all $(t_1-\eps_2,t_1]$. Therefore, by combining this with the fact that $\Sigma$ is embedded and $\Sigma_+(t)$ is connected, it follows that $K_+(t)$ can be written as a graph over its projection in $\Pi$ for all $(t_1-\eps_2,t_1]$.
			
			In summary, we have shown that there exists $\eps_2\in (0,t_1)$ such that $\Sigma_+(t)$ can be written as a graph over its projection in $\Pi$ for all $t\in (t_1-\eps_2,t_1]$. 
		\end{proof}
		
		\begin{step}\label{step2}
			There exists $\eps_4\in (0,\eps_2)$, such that $\Sigma_-(t_1-\eps_4)\leq \Sigma_+^*(t_1-\eps_4)$.
		\end{step}	
		
		\begin{proof}		
			Firstly, since $\Sigma_+(t)$ is a graph over its projection in $\Pi$ for all $t\in(t_1-\eps_2,t_1]$,  we may find a $\eps_3\in (0,\eps_2)$ such that 
			\begin{align}\label{eq2}
				\left(\Sigma_+^*(t)\cap \Sigma_-(t)\cap K\right)-\delta_t(\Sigma)\subset K_-(t-\eps_3).
			\end{align}
			In fact, to see this we note that if
			\begin{align*}
				x=(x_1,\ldots,x_{n+1})\in\Sigma_+^*(t)\cap \Sigma_-(t)\cap K\cap\delta_t(\Sigma)^c,
			\end{align*}
			we have that $x_1<t$ and $x_1=2t-y_1$ where $y_1=f(0,x_2,\ldots,x_{n+1})+t$ for some continuous function $f>0$ defined over a compact set $\Omega_f\subset\Pi$. 
			\newline
			Therefore, by compactness, we have that $y_1-t\geq \eps_3$ where $$ \eps_3=\min\set{\eps_2,\min\limits_{\Omega_f} f}>0.$$ This means that  for all $t\in (t_1-\eps_3,t_1]$ it follows that
			\begin{align*}
				t-x_1=y_1-t\geq \eps_3\Leftrightarrow x_1\leq t-\eps_3.
			\end{align*}
			
			Moreover, since $\Sigma_-(t_1)\leq \Sigma_+^*(t_1)$, it follows that 
			\begin{align}\label{eq1}
				\Sigma_+^*(t_1)\cap \Sigma_-(t_1)=\delta_{t_1}(\Sigma),
			\end{align}
			because otherwise we could find a first order contact point  $$p_0\in(\Sigma_+^*(t_1)\cap\Sigma_-(t_1))-\delta_{t_1}(\Sigma).$$ Then, if $p_0$ is an interior point, the interior tangential principle Theorem \ref{T2} would imply that $\Pi_{\textbf{p}(p_0)}$ is a hyperplane of symmetry of $\Sigma$, since $\Sigma_-(\textbf{p}(p_0))=\Sigma_+^*(\textbf{p}(p_0))$. 
			\newline
			Moreover, if $p_0$ is boundary point the same conclusion will hold by using instead the boundary tangential principle Theorem \ref{T2}. Consequently, either $\Sigma$ is symmetric about the hyperplane $\Pi$ or $\Sigma_-(t_1)\cap\Sigma_+^*(t_1)=\delta_{t_1}(\Sigma)$. 
			
			Next, we will show that there exists $\eps_4\in (0, \eps_3)$ such that 
			\begin{align*}
				\Sigma_+^*(t)\cap \Sigma_-(t)\cap K =\delta_{t}(\Sigma)\cap K. 
			\end{align*}
			holds for all $t\in (t_1-\eps_4,t_1]$. 
			
			Indeed, if it not the case, we may find an increasing sequence $t_l$ converging to $t_1$ such that
			\begin{align*}
				(\Sigma_+^*(t_l)\cap \Sigma_-(t_l)\cap K )-\delta_{t}(\Sigma)\neq\emptyset, \:\forall l\in\nn. 
			\end{align*}
			Let $(x_1^l,\ldots,x_{n+1}^l)=p_l\in (\Sigma_+^*(t_l)\cap \Sigma_-(t_l)\cap K )-\delta_{t}(\Sigma)$.  We note that by Equation \eqref{eq2} we have  
			\begin{align}\label{eq5}
				x_1^l\leq t_l-\eps_3. 
			\end{align}
			
			Then, by compactness, we may assume, after taking a subsequence if it is necessary,  that $p_l\to \tilde{p}=(\tilde{x}_1,\ldots,\tilde{x}_{n+1})\in \Sigma_+^*(t_1)\cap\Sigma_-(t_1)\cap K$. 
			\newline
			In particular,  Equation\eqref{eq1} gives that $\tilde{x}_1=t_1$. But, after taking limits on Equation \eqref{eq5}, we see that $\tilde{x}_1\leq t_1-\eps_3$, given a contradiction with $\tilde{x}_1=t_1$. 
			
			Therefore, there exists $\eps_4\in (0,\eps_3)$ such that
			\begin{align*}
				\Sigma_+^*(t)\cap \Sigma_-(t)\cap K =\delta_{t}(\Sigma)\cap K
			\end{align*} 
			holds for all $t\in (t_1-\eps_4,t_1]$. This means that 
			\begin{align*}
				\Sigma_-(t)\cap K\leq \Sigma_+^*(t)\cap K\mbox{ holds for all }t\in (t_1-\eps_3,t_1].
			\end{align*}
			Thus, combining the above line with Equation \eqref{eq3}, we finally obtain 
			\begin{align*}
				\Sigma_-(t)\leq \Sigma_+^*(t)\mbox{ holds for all }t\in (t_1-\eps_4,t_1]
			\end{align*}
			
		\end{proof}
		Therefore, by the Steps \ref{step1} and \ref{step2}, it follows that  $(t_1-\eps_4,t_1]\subset \mathcal{A}$, and as we mention in the beginning of the proof of this claim, the set $\mathcal{A}$ is an open subset of $[0, t_0)$.
	\end{proof}
	
	The proof finishes by noting that $\mathcal{A}$ is a non-empty open and closed subset of the connected set $[0,t_0)$. Therefore, $\mathcal{A}=[0,t_0)$. In particular, $\Sigma_-(0)\leq \Sigma_+^*(0)$. 
	\newline
	An analogous argument  will show that $\Sigma_-^*(0)\leq \Sigma_+(0)$. Therefore, $\Sigma$ is symmetric with respect the hyperplane $\Pi$, finalizing the proof of Theorem \ref{T3}. 
\end{proof}

\begin{proof}[Proof of Corollary \ref{C3} ]
	Theorem \ref{T3} implies that $\Sigma$ is a rotationally symmetric graph defined over the ball of  radius $r$. Then, by  Theorems 1.3-1.4 in \cite{Shati}, it follows that $r=\sqrt[\alpha]{\gamma(1,\ldots,1)}$ and $\Sigma$ is the ``bowl''-type $\gamma$-translator in $\rr^{n+1}$ up to vertical translations. 
\end{proof}

\section{Appendix: ``Bowl''-type solutions asymptitics to  round cylinders}\label{Sec: Appendix}

\begin{proposition}
	The ``bowl''-type solution which is defined in the ball $B_{r_0}(0)\subset\rr^{n}$ and $\gamma$ satisfies properties \ref{a)}-\ref{c)} with $\alpha>\dfrac{1}{2}$ is $\mathcal{C}^2$-asymptotic to the cylinder $\Sp^{n-1}(r_0)\times\rr$ where $r_0=\sqrt[\alpha]{\gamma(1,\ldots,1)}$. 
\end{proposition}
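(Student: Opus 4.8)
The plan is to translate the statement into an ODE for the rotationally symmetric profile and then read off the limiting principal curvatures using only the monotonicity and positivity of $\gamma$. Recall from \cite{Shati} that the ``bowl''-type solution here is a complete, strictly convex, rotationally symmetric vertical graph $x_{n+1}=\varphi(\rho)$ with $\rho=\abs{x}\in[0,r_0)$, $\varphi$ strictly convex, $\varphi'(0)=0$, and $\pI{\nu,e_{n+1}}=(1+\varphi'(\rho)^2)^{-1/2}$ along it. First I would record that, $\Sigma$ being a \emph{complete} graph over the \emph{bounded} ball $B_{r_0}^{n}(0)$, the meridian arclength $\int_{0}^{r_0}\sqrt{1+\varphi'(\rho)^2}\,d\rho$ must diverge; since $\sqrt{1+t^2}-t\le 1$ for $t\ge 0$, this is equivalent to $\int_{0}^{r_0}\varphi'(\rho)\,d\rho=\infty$, hence $\varphi(\rho)\to+\infty$, and (as $\varphi'$ is nondecreasing) $\varphi'(\rho)\to+\infty$ as $\rho\to r_0^{-}$. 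Consequently $\Sigma\cap\set{x_{n+1}=h}$ is the round sphere of radius $\rho(h)\to r_0$, so $\Sigma$ is Hausdorff-asymptotic to $\Sp^{n-1}(r_0)\times\rr$; the value $r_0=\sqrt[\alpha]{\gamma(1,\ldots,1)}$ is the one recorded in \cite{Shati}, consistent with the vertex identity $\gamma(\mu,\ldots,\mu)=\pI{\nu(0),e_{n+1}}=1$ (where $\mu>0$ is the common principal curvature at $\rho=0$), i.e.\ $\mu=\gamma(1,\ldots,1)^{-1/\alpha}$ by property \ref{b)}. It remains to verify the $\mathcal{C}^2$ part, namely the principal-curvature convergence demanded in the hypothesis of Theorem \ref{T3}.

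Next I would write the two principal curvatures of the graph: the radial one $\kappa_1(\rho)=\varphi''(\rho)/(1+\varphi'(\rho)^2)^{3/2}$ and the common angular one $\kappa_\circ(\rho)=\varphi'(\rho)/\bigl(\rho\sqrt{1+\varphi'(\rho)^2}\bigr)$, the latter with multiplicity $n-1$. Since $\varphi'(\rho)\to\infty$ and $\rho\to r_0$, one reads off immediately that $\kappa_\circ(\rho)\to 1/r_0$, which gives $\lambda_i\to 1/r_0$ for $i=2,\ldots,n$. The only thing left is $\kappa_1(\rho)\to 0$ (which then also identifies $\lambda_1=\kappa_1$ as the minimal principal curvature near the end, matching the convention in Theorem \ref{T3}).

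For that, evaluate the translator equation \eqref{gamma-trans} (with $v=e_{n+1}$) along the profile:
\begin{align*}
	\gamma\pare{\kappa_1(\rho),\kappa_\circ(\rho),\ldots,\kappa_\circ(\rho)}=\dfrac{1}{\sqrt{1+\varphi'(\rho)^2}},
\end{align*}
which tends to $0$ as $\rho\to r_0^{-}$ since $\varphi'(\rho)\to\infty$. Now argue by contradiction: if $\kappa_1\not\to 0$ then, strict convexity forcing $\kappa_1>0$, there are $\delta>0$ and $\rho_j\to r_0$ with $\kappa_1(\rho_j)\ge\delta$; since $\kappa_\circ(\rho_j)\to 1/r_0$, for $j$ large the vector $(\kappa_1(\rho_j),\kappa_\circ(\rho_j),\ldots,\kappa_\circ(\rho_j))$ dominates $\pare{\delta,\tfrac{1}{2r_0},\ldots,\tfrac{1}{2r_0}}\in\Gamma_+$ coordinatewise, so by monotonicity (property \ref{c)}) and positivity (property \ref{a)}),
\begin{align*}
	\gamma\pare{\kappa_1(\rho_j),\kappa_\circ(\rho_j),\ldots,\kappa_\circ(\rho_j)}\ \ge\ \gamma\pare{\delta,\tfrac{1}{2r_0},\ldots,\tfrac{1}{2r_0}}=:c_0>0,
\end{align*}
contradicting the limit $0$. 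Hence $\kappa_1(\rho)\to 0$, and combined with the previous paragraph the principal curvatures of $\Sigma$ satisfy exactly the definition of being $\mathcal{C}^2$-asymptotic to $\Sp^{n-1}(r_0)\times\rr$.

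I expect the only genuinely delicate points to be (i) the justification that completeness over a bounded ball forces $\varphi'\to\infty$ (hence $\varphi\to\infty$) at $\rho=r_0$ — a short arclength computation, but one that must be stated — and (ii) keeping the contradiction argument for $\kappa_1\to 0$ airtight, the key observation being that one never needs to evaluate $\gamma$ on $\partial\Gamma$: both $(\kappa_1,\kappa_\circ,\ldots,\kappa_\circ)$ and the comparison vector $\pare{\delta,\tfrac{1}{2r_0},\ldots,\tfrac{1}{2r_0}}$ lie in $\Gamma_+$, so properties \ref{a)} and \ref{c)} alone suffice and property \ref{d} plays no role here. The hypothesis $\alpha>\tfrac12$ enters only through \cite{Shati}, to guarantee that the dichotomy there indeed places this bowl over a ball in the first place; it is not used in the curvature-convergence argument itself.
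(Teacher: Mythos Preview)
Your argument is correct and takes a genuinely different route from the paper. The paper writes down the explicit profile ODE $\dot{\vv}=(1+\vv^2)^{\alpha}f(\vv/r)$ for the slope $\vv=\varphi'$, where $f$ is obtained from the implicit function theorem on the level set $\gamma(x,y,\ldots,y)=1$; it then invokes the characterization from \cite{Shati} (``bowl defined in a ball'' forces $f$ to stay bounded as its argument tends to infinity) to bound $\lambda_1\le C/\sqrt{1+\vv^2}\to 0$. You bypass both the ODE and the boundedness of $f$: you feed the translator identity $\gamma(\kappa_1,\kappa_\circ,\ldots,\kappa_\circ)=(1+\varphi'^2)^{-1/2}\to 0$ into a contradiction via the monotonicity property \ref{c)}, which is more elementary and uses strictly less input. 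You also supply the completeness step forcing $\varphi'\to\infty$, which the paper merely asserts (``since $\lim_{r\to r_0}\vv(r)=\infty$'') with an implicit appeal to \cite{Shati}. The paper's route has the small advantage of yielding an explicit decay rate for $\lambda_1$; yours is self-contained and does not depend on the fine structure of the profile ODE or on the asymptotics of $f$.
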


\begin{proof}
Firstly, we note that the profile curve of the ``bowl''-type $\gamma$-translator is given by 
	\begin{align*}
		\set{(r,\uu(r))\in\rr^2:r\in [0,r_0)}.
	\end{align*}
	Then, by setting $\vv=\dot{\uu}$, we have that 
	\begin{align}\label{ODE v}
		\dot{\vv}=(1+\vv^2)^{\alpha}f\left(\dfrac{\vv}{r}\right),	
	\end{align}
	where $f(x)$ appears  in \eqref{ODE v} by the implicit function theorem applied on the level set
	\begin{align*}
		\set{(x,y)\in\rr^2:\gamma(x,y,\ldots,y)=1}.
	\end{align*}
	Consequently, since the ``bowl''-type solution is defined in a ball, it follows that $\lim\limits_{x\to\infty}f(x)=L\geq0$. 
	\newline
	Next, the principal curvatures of the ``bowl''-type solution satisfy
	\begin{align*}
		0<&\lambda_1=\dfrac{\dot{\vv}}{(1+\vv^2)^{\frac32}}=\dfrac{f\left(\dfrac{\vv}{r}\right)}{\sqrt{1+\vv^2}}\leq \dfrac{C}{\sqrt{1+\vv^2}},
		\\
		&\lambda_i=\dfrac{\vv}{r\sqrt{1+\vv^2}}, \mbox{ for }i=2,\ldots,n.
	\end{align*}
	Therefore, since $\lim\limits_{r\to r_0}\vv(r)=\infty$, it follows that $\lambda_1\to 0$ and $\lambda_i=\dfrac{1}{r_0}$ as $|p|\to\infty$.
\end{proof}

%

\begin{bibdiv}
\begin{biblist}

\bib{andrews_2004}{article}{
	author={Andrews, B.},
	title={Pinching estimates and motion of hypersurfaces by curvature
		functions},
	journal={J. Reine Angew. Math.},
	volume={608},
	date={2007},
	pages={17--33},
	issn={0075-4102},
	review={\MR{2339467}},
	doi={10.1515/CRELLE.2007.051},
}

\bib{chow2020extrinsic}{book}{
	author={Andrews, B.},
	author={Chow, B.},
	author={Guenther, C.},
	author={Langford, M.},
	title={Extrinsic geometric flows},
	series={Graduate Studies in Mathematics},
	volume={206},
	publisher={American Mathematical Society, Providence, RI},
	date={2020},
	pages={xxviii+759},
	isbn={978-1-4704-5596-5},
	review={\MR{4249616}},
}

\bib{gilbra_trudinger}{book}{
	author={Gilbarg, D.},
	author={Trudinger, N.},
	title={Elliptic partial differential equations of second order},
	series={Grundlehren der mathematischen Wissenschaften [Fundamental
		Principles of Mathematical Sciences]},
	volume={224},
	edition={2},
	publisher={Springer-Verlag, Berlin},
	date={1983},
	pages={xiii+513},
	isbn={3-540-13025-X},
	review={\MR{737190}},
	doi={10.1007/978-3-642-61798-0},
}

\bib{MR4029723}{article}{
	author={Hoffman, D.},
	author={Ilmanen, T.},
	author={Mart\'{\i}n, F.},
	author={White, B.},
	title={Correction to: Graphical translators for mean curvature flow},
	journal={Calc. Var. Partial Differential Equations},
	volume={58},
	date={2019},
	number={4},
	pages={Paper No. 158, 1},
	issn={0944-2669},
	review={\MR{4029723}},
	doi={10.1007/s00526-019-1601-5},
}

\bib{Paco1}{article}{
	author={Hoffman, D.},
	author={Ilmanen, T.},
	author={Mart\'{\i}n, F.},
	author={White, B.},
	title={Graphical translators for mean curvature flow},
	journal={Calc. Var. Partial Differential Equations},
	volume={58},
	date={2019},
	number={4},
	pages={Paper No. 117, 29},
	issn={0944-2669},
	review={\MR{3962912}},
	doi={10.1007/s00526-019-1560-x},
}

\bib{Paco_2014}{article}{
	author={Mart\'{\i}n, F.},
	author={Savas-Halilaj, A.},
	author={Smoczyk, K.},
	title={On the topology of translating solitons of the mean curvature
		flow},
	journal={Calc. Var. Partial Differential Equations},
	volume={54},
	date={2015},
	number={3},
	pages={2853--2882},
	issn={0944-2669},
	review={\MR{3412395}},
	doi={10.1007/s00526-015-0886-2},
}

\bib{moller_2014}{article}{
   author={Møller, N.},
   title={Non-existence for self-translating solitons},
   journal={Pre print 	arXiv:1411.2319},
   date={2014},
   
}

\bib{langford2020sharp}{article}{
	author={Langford, M.},
	author={Lynch, S.},
	title={Sharp one-sided curvature estimates for fully nonlinear curvature
		flows and applications to ancient solutions},
	journal={J. Reine Angew. Math.},
	volume={765},
	date={2020},
	pages={1--33},
	issn={0075-4102},
	review={\MR{4129354}},
	doi={10.1515/crelle-2019-0010},
}

\bib{Lynch2021uniqueness}{article}{
	author={Lynch, S.},
	title={Uniqueness of convex ancient solutions to hypersurface flows},
	journal={J. Reine Angew. Math.},
	volume={788},
	date={2022},
	pages={189--217},
	issn={0075-4102},
	review={\MR{4445547}},
	doi={10.1515/crelle-2022-0022},
}

\bib{Shati}{article}{
	author={Rengaswami, S.},
	title={Rotationally symmetric translating solutions to extrinsic geometric flows},
	journal={Preprint arXiv:2109.10456},
	date={2021},
}

\end{biblist}
\end{bibdiv}

\end{document}